\newcommand{\Spec}{\operatorname{Spec}}
\renewcommand{\phi}{\varphi}
\newcommand{\rn}{\operatorname{rank}}
\newcommand{\Ker}{\operatorname{Ker}}
\newcommand{\Ima}{\operatorname{Im}}
\newcommand{\Max}{\operatorname{Max}}
\newcommand{\Min}{\operatorname{Min}}
\newcommand{\Ann}{\operatorname{Ann}}
\newcommand{\Hom}{\operatorname{Hom}}
\newcommand{\Cl}{\operatorname{\mathfrak{C}}}
\newcommand{\Pic}{\operatorname{Pic}}
\newcommand{\End}{\operatorname{End}}
\newtheorem{proposition}{Proposition}[section]
\newtheorem{lemma}[proposition]{Lemma}
\newtheorem{corollary}[proposition]{Corollary}
\newtheorem{theorem}[proposition]{Theorem}
\theoremstyle{definition}
\newtheorem{definition}[proposition]{Definition}
\newtheorem{example}[proposition]{Example}
\newtheorem{remark}[proposition]{Remark}
\patchcmd{\@settitle}{\uppercasenonmath\@title}{}{}{}
\patchcmd{\@setauthors}{\MakeUppercase}{}{}{}
\begin{document}

\title[Ideal class group and Picard group]{Ideal class group of an extension of rings and Picard group}

\author[A. Tarizadeh]{Abolfazl Tarizadeh}

\address{Department of Mathematics, Faculty of Basic Sciences, University of Maragheh, Maragheh, East Azerbaijan Province, Iran.}
\email{ebulfez1978@gmail.com}

\date{}
\subjclass[2020]{13B02, 14C22, 13C10, 11R29, 14C20}
\keywords{Invertible module; Ideal class group; Picard group}

\begin{abstract} For any extension of commutative rings $A\subseteq B$, by using invertible ideals, we first define an Abelian group $\Cl(A,B)$, that we call the ideal class group of this extension. Then we study the main properties of this group. Among them, we prove that the group $\Cl(A,B)$ is indeed the kernel of the natural group morphism $\Pic(A)\rightarrow \Pic(B)$ which is given by $L\mapsto L\otimes_{A}B$.
Then we show that both the classical ideal class group and, surprisingly, the Picard group are special cases of this structure. Next, we prove that there is a canonical isomorphism of groups $\Cl(A,B)\simeq\Cl(A_{\mathrm{red}},
B_{\mathrm{red}})$. We also show that if $A$ has finitely many maximal ideals, then the group $\Cl(A,B)$ is trivial. 
In the next key result, for any ring $A$, we obtain a canonical group map $\mathfrak{C}(A)\rightarrow
\mathfrak{C}(A_{\mathrm{red}})$ which is always injective where $\mathfrak{C}(A):=\Cl(A,T(A))$ and $T(A)$ is the total ring of fractions of $A$. In the same vein, we find an important example showing that this canonical injection is not necessarily surjective, and hence in general the ideal class group does not behave like the Picard group.\\
Next, using ideas from algebraic geometry, we prove that every commutative ring has a (faithfully flat) ring extension whose Picard group is trivial.
As an application, we show that every invertible module has the avoidance property.  \\
The Picard group does not necessarily commute with the ``infinite" direct product of rings. However, we show that the Picard group of the direct product of an (infinite) family of rings can be canonically embedded in the direct product of the Picard groups of the corresponding factor rings. As an application, the Picard group of an (infinite) direct product of rings is trivial if and only if the Picard group of each factor ring is trivial. We also prove that the Picard group of every zero-dimensional ring is trivial. 
\end{abstract}

\maketitle

\section{Introduction}

The subject of this article lies at the  intersection of commutative algebra,  algebraic geometry, and algebraic number theory. 

The classical ideal class group of an integral domain is of special importance in algebraic number theory and understanding the structure of this group is indeed one of the central goals of this field. 

The first main aim of this article is to generalize naturally this group to the more general setting of an arbitrary extension of commutative rings. In fact, for any extension of commutative rings $A\subseteq B$, just as in the classical construction, we define $\Cl(A,B)$ as the group of invertible ideals of this extension modulo the subgroup of principal invertible ideals. We call $\Cl(A,B)$ the ideal class group of the extension $A\subseteq B$.  

We will then examine the relationship of this group to the Picard groups of the corresponding rings in the extension. In fact, after overcoming some technical difficulties arising from this generalization, then in Theorem \ref{Thm TRS 2}, we obtain the following exact sequence of Abelian groups: $$\xymatrix{0\ar[r]&\mathfrak{C}(A,B)\ar[r]&\Pic(A)
\ar[r]&\Pic(B).}$$
This result indeed tells us that
the ideal class group $\Cl(A,B)$ is naturally isomorphic to the kernel of the canonical group morphism $\Pic(A)\rightarrow\Pic(B)$ that is given by $L\mapsto L\otimes_{A}B$. To prove this theorem, one of the key results that applied is Lemma \ref{key lemma} which characterizes invertible ideals in terms of invertible modules. It is interested in noting that the converse is also true, i.e., every invertible module can be realized as an invertible ideal of a certain ring extension (see the proof of Corollary \ref{coro toofan 21}).

Next, in Theorem \ref{Lemma TD 6}, we obtained an unexpected result which states that every commutative ring has a faithfully flat ring extension whose Picard group is trivial. 

This result had an exciting story for us. Indeed, at first we formulated this theorem merely as a conjecture. Next, we asked Pierre Deligne whether this conjecture was true or not. Then, he answered that the conjecture was correct and sent us a geometrical proof of it. 
However, his very sketchy proof was too difficult for us to understand and was based entirely on modern algebraic geometry. In fact, his proof was based on the following two main ideas. The first idea was that line bundles (projective modules everywhere of rank one) are classified by $\mathbb{G}_{\mathrm{m}}$-torsors (principal homogeneous spaces). The second idea was that the universal way to vanish (trivialize) a torsor under a group scheme is to base change by this same torsor. 

Then, using the algebraic interpretations of these geometric ideas, we (with the collaboration of Melvin Hochster) were able to transform Deligne's proof into a purely algebraic proof. In fact, from the algebraic point of view, one of the key ideas of the proof is that the push-out of every line bundle $L\in\Pic(A)$ is vanished (trivialized) by its own torsor algebra $\bigoplus\limits_{n\in\mathbb{Z}}L^{\otimes n}$ which is a $\mathbb{Z}$-graded commutative ring. However, instead of this torsor algebra, we found a much simpler algebra for vanishing the line bundle $L$ (see the proof of theorem).
The second main idea is that, by taking the tensor products of the algebras vanishing each of the line bundles, one can vanish all them together (then repeat this process to vanish the new line bundles which could have appeared). The final idea of the proof is that the Picard group commutes with direct limits. 

Next, in Theorem \ref{key lemma 2 Pic}, we prove that the Picard group of the direct product of an (infinite) family of rings can be canonically embedded in the direct product of the Picard groups of the factor rings. In particular, the Picard group of the direct product of an arbitrary family of rings is trivial if and only if the Picard group of each factor ring is trivial. As an application, a weak version of the above theorem is easily deduced which states that every commutative ring has a ring extension whose Picard group is trivial. 

In Theorem \ref{lemma toofan 20}, we prove another important result which asserts that for a given extension of rings $A\subseteq B$, if $A$ has finitely many maximal ideals then every invertible ideal of this extension is principal.

Then some applications of the above theorems are given. 
We show that in a given extension of rings $A\subseteq B$, if $B$ has finitely many maximal ideals then we have a canonical isomorphism of groups $\Cl(A,B)\simeq\Pic(A)$. As another application, for any commutative ring $A$, we obtain the following exact sequence of Abelian groups: $$\xymatrix{0\ar[r]&\mathfrak{C}(A)\ar[r]&\Pic(A)\ar[r]&
\Pic(T(A))}$$ where $\Cl(A):=\Cl(A, T(A))$ and $T(A)$ is the total ring of fractions of $A$. 
In particular, if $A$ is a reduced ring with finitely many minimal primes (e.g. an integral domain), then we have a canonical isomorphism of groups $\Cl(A)\simeq\Pic(A)$. 

These applications culminate in showing that the Picard group is a special case of the ideal class group. More precisely, we show that the Picard group of every commutative ring is canonically isomorphic to the ideal class group of a certain extension of that ring (see Corollary \ref{Coro culmination}). 
These applications indeed show that this new concept of the ideal class group of an extension unifies classical structures.

As another important application, in Corollary \ref{Coro JPAA}, we prove that every invertible module (line bundle) has the avoidance property. This result considerably generalizes \cite[Theorem 1.5]{Quartararo-Butts} and our recent result \cite[Corollary 3.2]{Tarizadeh-Chen}. 

In Theorem \ref{Thm zero dim}, we prove another interesting result which asserts that the Picard group of every zero-dimensional ring is trivial. 

We learned that the classical ideal class group is also denoted by some authors as ``Cl''. However, this indication has already been used in the literature to denote the Weil divisor class group. Therefore, to avoid confusion, we will not use this notation to denote the ideal class group. Also, a reviewer has called our attention to the article \cite{Roberts-Singh}. We then observed that some exact sequences similar to the exact sequence we obtained in Theorem \ref{Thm TRS 2} can be found in the literature \cite[Chap IX, \S3, Theorem 3.3]{Bass} and \cite[Theorem 2.4]{Roberts-Singh}. Although there are fewer connections between the exact sequence obtained in \cite[Chap IX, \S3, Theorem 3.3]{Bass} and Theorem \ref{Thm TRS 2} (on this subject, see also Remark \ref{Remark bir}), our result has very close connections with \cite[Theorem 2.4]{Roberts-Singh}.

Next, we will study the behaviour of the ideal class group for a tower of extensions of rings. In fact, in Theorem \ref{Theorem fourth TS}, for any extensions of commutative rings $A\subseteq B \subseteq C$, we obtain the following exact sequence of Abelian groups:  $$\xymatrix{0\ar[r]&
\mathfrak{C}(A,B)\ar[r]&\mathfrak{C}(A,C)\ar[r]&
\mathfrak{C}(B,C).}$$
We also prove that under a gentle condition the above sequence is also right exact (see Corollary \ref{Coro Ani-Elman-Asena}). Similarly, an exact sequence about the groups of invertible ideals: $$\xymatrix{0\ar[r]&
\mathscr{G}(A,B)\ar[r]&\mathscr{G}(A,C)\ar[r]&
\mathscr{G}(B,C)}$$ is obtained first in \cite[\S3]{Singh} and later in \cite[Lemma 4.3]{Sahhu-Weibel}. However, as we will see in the proof of Theorem \ref{Theorem fourth TS}, it is quite easy to prove the exactness of this sequence. In fact, Theorem \ref{Theorem fourth TS} improves this result significantly. 
  
As another main result, in Theorem \ref{theorem TH}, we show that for any extension of commutative rings $A\subseteq B$, we have a canonical isomorphism of groups $\Cl(A,B)\simeq\Cl(A_{\mathrm{red}},
B_{\mathrm{red}})$. Note that a result similar to Theorem \ref{theorem TH} was proved in \cite[Theorem 4.1]{Sahhu-Weibel}, but the group $L\mathcal{I}(A,B)$ defined in that article is a completely different concept and has no connection with our ideal class group $\Cl(A,B)$. For instance, $L\mathcal{I}(A,B)$ is always a torsion-free Abelian group (see \cite[Corollary 3.6]{Sahhu-Weibel}), but for some certain extensions $A\subseteq B$ the group $\Cl(A,B)$ will be isomorphic to the additive torsion group $\mathbb{Z}/d$ for some $d\geqslant2$.  

We also show that for any commutative ring $A$ we obtain a canonical group morphism $\mathfrak{C}(A)\rightarrow
\mathfrak{C}(A_{\mathrm{red}})$ that is always injective (see Corollary \ref{Coro ilginch}). In the same vein, we find a key example showing that, unlike the Picard group, this canonical injection is not an isomorphism in general (see Example \ref{Example TDH}). In this example we heavily use the whole strength of the Nagata idealization tool to achieve the goal.  

Although the total ring of fractions in general is not a functorial construction, for some extensions of rings $A\subseteq B$ that makes it functorial (including all extensions of integral domains), we obtain the following exact sequence of Abelian groups (see Theorem \ref{Coro ilginch uch}): $$\xymatrix{0\ar[r]&
\mathfrak{C}(A,B)\ar[r]&\mathfrak{C}(A)\ar[r]&
\mathfrak{C}(B).}$$

Next we will examine the vanishing aspect of the ideal class group. In this regard, in Corollary \ref{Lemma TH 2}, we show that if an extension of rings has a retraction, then its ideal class group is trivial. As an application, we show that for any commutative ring $A$ and for any commutative monoid $M$, the ideal class group of the monoid-ring extension $A\subseteq A[M]$ is trivial. In particular, the ideal class groups of the polynomial ring extensions $A\subseteq A[x]$ and $A\subseteq A[x,x^{-1}]$ are trivial. The ideal class group of the extension $A\subseteq A[[x]]$ is also trivial. In this regard, we also show that the ideal class group of any extension of rings $A\subseteq B$ with $A$ has finitely many maximal ideals is trivial as well (see Theorem \ref{lemma toofan 20}). 
In other words, every invertible ideal of these extensions is principal. 
 
Finally, we give another important example showing that there are some ring extensions $A\subseteq B$ such that the kernel of the canonical group map $\Pic(A)\rightarrow\Pic(B)$ is very sensitive to changes in scalars (see Example \ref{Example 3 three}). 

\section{Preliminaries}

In this section, we recall some basic background for the convenience of the reader. The group of units (invertible elements) of a commutative ring $A$ is denoted by $A^{\ast}$. The set of zero-divisors of $A$ is denoted by $Z(A)=\{a\in A: \exists 0\neq b \in A, ab=0\}$. By $A_{\mathrm{red}}$ we mean the ring $A$ modulo its nil-radical. If $r$ is an element of a ring $A$, then $D(r)=\{\mathfrak{p}\in\Spec(A): r\notin\mathfrak{p}\}$. 

Recall that if $F=\bigoplus\limits_{k\in S}A$ is a free module over a ring $A$ and $M$ is an $A$-module, then we have the canonical isomorphism of $A$-modules $F\otimes_{A}M\rightarrow\bigoplus\limits_{k\in S}M$ which sends each pure tensor $(r_{k})\otimes x$ into $(r_{k}x)$. The inverse map $\bigoplus\limits_{k\in S}M\rightarrow F\otimes_{A}M$ is given by $(x_{k})\mapsto\sum\limits_{k\in S}\epsilon_{k}\otimes x_{k}$ where $\epsilon_{k}=(\delta_{i,k})_{i\in S}$ and $\delta_{i,k}$ is the Kronecker delta.

If $M$ is a finitely generated flat module over a ring $A$, then for each $\mathfrak{p}\in\Spec(A)$, there exists a (unique) natural number $n_{\mathfrak{p}}\geqslant0$ such that $M_{\mathfrak{p}}\simeq
(A_{\mathfrak{p}})^{n_{\mathfrak{p}}}$ as  $A_{\mathfrak{p}}$-modules, because it is well known that every finitely generated flat module over a local ring is a free module (see e.g. \cite[Theorem 7.10]{Matsumura}). 
Hence, we obtain a function  $\mathrm{r}_{M}:\Spec(A)\rightarrow\mathbb{Z}$ given by $\mathfrak{p}\mapsto n_{\mathfrak{p}}$. We call $\mathrm{r}_{M}$ the rank map of $M$. 

It can be seen that this number $\rn_{R_{\mathfrak{p}}}(M_{\mathfrak{p}}):=n_{\mathfrak{p}}$ is the dimension of $\kappa(\mathfrak{p})$-vector space $M\otimes_{A}\kappa(\mathfrak{p})$ where $\kappa(\mathfrak{p})=A_{\mathfrak{p}}/
\mathfrak{p}A_{\mathfrak{p}}$ is the residue field of $A$ at $\mathfrak{p}$.

If $M$ and $N$ are finitely
generated flat (resp. projective) $R$-modules, then $M\oplus N$ and $M\otimes_{R}N$ are finitely generated flat (resp. projective) $R$-modules and we have $\mathrm{r}_{M\oplus N} = \mathrm{r}_{M}+\mathrm{r}_{N}$ and $\mathrm{r}_{M\otimes_{R}N} = \mathrm{r}_{M}\cdot\mathrm{r}_{N}$.

It is well known that the rank map of a finitely generated flat $A$-module is continuous if and only if it is a projective $A$-module. 

By invertible module over a ring $A$ we mean a finitely generated projective $A$-module of constant rank 1. 

Let $M$ and $N$ be modules over a ring $A$. If $M\otimes_{A}N\simeq A^{n}$ as $A$-modules for some $n\geqslant1$, then $M$ and $N$ are finitely generated projective $A$-modules. In particular, if $n=1$, then $M$ and $N$ are invertible $A$-modules. 

It can be seen that a module $M$ over a ring $A$ is invertible (i.e., finitely generated projective $A$-module of constant rank 1) if and only if there are (finitely many) elements $f_{1},\ldots,f_{n}\in A$ which generate the unit ideal of $A$ and $M_{f}$ is a free $A_{f}$-module of constant rank 1 for all $f\in\{f_{1},\ldots,f_{n}\}$,
or equivalently, the canonical morphism of $A$-modules $\widehat{M}\otimes_{A}M\rightarrow A$ given by $f\otimes x\mapsto f(x)$ is an isomorphism where the $A$-module $\widehat{M}=\Hom_{A}(M,A)$ is called the dual of $M$. In this case, $\widehat{M}$ is also an invertible $A$-module. 

All these results are standard and can be found in the literature (see e.g. \cite{Bass}, \cite{Silvester}, \cite{Swan} and \cite{Weibel}).

For any commutative ring $A$, by $\Pic(A)$ we mean the set of isomorphism classes of invertible modules over $A$ (invertible modules are also called line bundles). 
This set $\Pic(A)$ by the  operation $[L_{1}]\cdot [L_{2}]=[L_{1}\otimes_{A}L_{2}]$ is an Abelian group whose identity element is the isomorphism class of $A$, and the inverse of each element $[L]\in\Pic(A)$ is the isomorphism class of $\widehat{L}$. The group $\Pic(A)$ was introduced by Grothendieck and given the name \emph{Picard group of $A$}. We will often denote the isomorphism class $[L]$ simply by $L$ if there is no confusion.
 
If $f:A\rightarrow B$ is a morphism of rings, then the map $\Pic(f):\Pic(A)\rightarrow\Pic(B)$ given by $L\mapsto L\otimes_{A}B$ is a morphism of groups. This group morphism in general is not necessarily injective, even if $f$ is injective. 

The Picard group construction is indeed a covariant functor from the category of commutative rings to the category of Abelian groups. 

It can be seen that the Picard group commutes with direct limits. More precisely, if $(A_{k}, f_{i,k})$ is a direct system of rings over a (directed) poset $I$, then the groups $\Pic(A_{k})$ together with the maps $\Pic(f_{i,k})$, as transition morphisms, is a direct system of Abelian groups over the same (directed) poset and we have the canonical isomorphism of groups $\Pic(\lim\limits_{\overrightarrow{k\in I}}A_k)\simeq\lim\limits_{\overrightarrow{k\in I}}\Pic(A_k)$. 

The Picard group also commutes with ``finite'' direct products of rings. In particular, for given rings $A$ and $B$ consider their direct product ring $R=A\times B$, then the morphism of groups $\Pic(R)\rightarrow\Pic(A)\times\Pic(B)$ given by $L\mapsto(L\otimes_{R}A, L\otimes_{R}B)$ is an isomorphism. In this regard see also Theorem \ref{key lemma 2 Pic}.

The following well known theorem is another fundamental result on Picard groups:

\begin{theorem}\label{Thm 100 npf} Let $I$ be an ideal of a ring $A$ and consider the canonical ring map $f:A\rightarrow A/I$. If $I$ is contained in the Jacobson radical of $A$, then the map $\Pic(f):\Pic(A)\rightarrow\Pic(A/I)$ is injective. If moreover, $I$ is contained in the nil-radical of $A$, then the map $\Pic(f)$ is an isomorphism. In particular, we have a canonical isomorphism of groups $\Pic(A)\simeq\Pic(A_{\mathrm{red}})$.   
\end{theorem}

Parts of the proof can be found in \cite[Chap III, Proposition 2.12]{Bass} and \cite[Chap X, \S5, Theorem 5.10, 1.(b)]{Lombardi}), but finding an integrated and single proof of this theorem, especially the surjectiveness of the map $\Pic(f)$, is a rather difficult task. Hence, we provide a proof for it. 

First recall that if $A\rightarrow B$ is a surjective morphism of rings then for any $B$-modules $M$ and $N$ we have $\Hom_{B}(M,N)=\Hom_{A}(M,N)$.

It is well known that if $M$ and $N$ are finitely generated projective $A$-modules such that $M/IM\simeq N/IN$ and $I$ is contained in the Jacobson radical of $A$, then $M\simeq N$. This establishes the injectivity of $\Pic(f)$. 

To see surjectivity, let $L$ be an invertible module over $A/I$. There is a surjective morphism of $A$-modules $\phi:(A/I)^{n}\rightarrow L$ for some $n\geqslant1$, and since $L$ is a projective module there is a morphism of $A$-modules $\psi:L\rightarrow (A/I)^{n}$ such that $\phi\circ\psi$ is the identity map. Then $g=\psi\circ\phi:(A/I)^{n}\rightarrow (A/I)^{n}$ is a morphism of $A$-modules such that $L\simeq\Ima(g)$ and $g$ is an idempotent of the ring $\End((A/I)^{n})$, i.e., $g\circ g=g$. Then we show that the natural ring map $\End(f):\End(A^{n})\rightarrow\End((A/I)^{n})$ lifts idempotents. 

Recall that if $M$ is an $A$-module, then any ring map $A\rightarrow B$ induces the ring map $\End_{A}(M)\rightarrow\End_{B}(M\otimes_{A}B)$ that is given by $h\mapsto h\otimes1_{B}$. 

We denote by $\mathscr{M}_{n}(A)$ the ring of $n\times n$ square matrices over $A$. There is a natural isomorphism of rings $\mathscr{M}_{n}(A)\rightarrow\End(A^{n})$ which is given by $M\mapsto T_{M}$ where $T_{M}:A^{n}\rightarrow A^{n}$ is defined as $x\mapsto Mx$. The following diagram of rings is also commutative: $$\xymatrix{\mathscr{M}_{n}(A)
\ar[r]^{\mathscr{M}_{n}(f)\:\:\:\:\:}\ar[d]^{\simeq}&
\mathscr{M}_{n}(A/I)\ar[d]^{\simeq}
\\\End(A^{n})\ar[r]^{\End(f)\:\:\:\:\:\:\:}&
\End((A/I)^{n}).}$$ 
 
Hence, to prove that $\End(f)$ lifts idempotents, it suffices to show that $\mathscr{M}_{n}(f)$ lifts idempotents.  
 
The ring map $\mathscr{M}_{n}(f):
\mathscr{M}_{n}(A)\rightarrow\mathscr{M}_{n}(A/I)$ is surjective and $\Ker(\mathscr{M}_{n}(f))=\mathscr{M}_{n}(I)$, the set of all $n\times n$ matrices with entries in $I$.
We claim that the two-sided ideal $\mathscr{M}_{n}(I)$ is contained in the set of nilpotent elements of $\mathscr{M}_{n}(A)$. Indeed, take a matrix $M$ in $\mathscr{M}_{n}(I)$, and let $J$ be the ideal of $A$ generated by the entries of $M$. Then $A\in\mathscr{M}_{n}(J)$. Clearly $J$ is a nilpotent ideal, i.e., $J^{d}=0$  for some $d\geqslant1$ (because $J$ is a finitely generated ideal contained in the nilradical of $A$). For any ideals $I_{1}$ and $I_{2}$ of $A$ we have $\mathscr{M}_{n}(I_{1})\mathscr{M}_{n}(I_{2})\subseteq
\mathscr{M}_{n}(I_{1}I_{2})$.
This shows that $A^{d}=0$, and establishes the claim. 

It is well known that if a two-sided ideal of a ring (not necessarily commutative) is contained in the set of nilpotent elements, then the idempotents of this ring can be lifted modulo this ideal.
   
This shows that the natural ring map $\mathscr{M}_{n}(f)$ and so $\End(f)$ lift idempotents. Thus there exists a morphism of $A$-modules $h:A^{n}\rightarrow A^{n}$ such that $h\circ h=h$ and $h\otimes1_{A/I}=g$ with considering the identification $(A/I)^{n}=A^{n}\otimes_{A}A/I$. Since $h$ is idempotent, we have a decomposition of $A$-modules $A^{n}\simeq\Ker(h)\oplus\Ima(h)$. This shows that
$M:=\Ima(h)$ is a finitely generated projective $A$-module. So we have the following split exact sequence of $A$-modules: $$\xymatrix{0\ar[r]&K\ar[r]&A^{n}\ar[r]^{h'}&M\ar[r]&0}$$
where $h'(x):=h(x)$ and $K=\Ker(h')$. Then tensoring with $-\otimes_{A}A/I$ leaves the sequence split exact: 
$$\xymatrix{0\ar[r]&K\otimes_{A}A/I
\ar[r]&A^{n}\otimes_{A}A/I\ar[r]^{h'\otimes1}
&M\otimes_{A}A/I
\ar[r]&0.}$$
This shows that $M/IM\simeq\Ima(h'\otimes1)$. 
Note that $h=i\circ h'$ where $i: M\rightarrow A^{n}$ is the inclusion map. We have  $h\otimes1=(i\otimes1)\circ(h'\otimes1)$. The map $i\otimes1$ is injective because, similar to the above, the split exact sequence: $$\xymatrix{0\ar[r]&M\ar[r]^{i}&A^{n}
\ar[r]&\Ima(h)\ar[r]&0}$$ gives the following split exact sequence: $$\xymatrix{0\ar[r]&M\otimes_{A}A/I\ar[r]^{i\otimes1}&
A^{n}\otimes_{A}A/I\ar[r]&\Ima(h)\otimes_{A}A/I\ar[r]&0.}$$
Thus $\Ima(h\otimes1)\simeq\Ima(h'\otimes1)$. Then 
$M/IM\simeq\Ima(h\otimes1)
\simeq\Ima(g)\simeq L$. Finally, we need to show that the rank map of $M$ is the constant function 1. If $\mathfrak{p}$ is a prime ideal of $A$, then $I\subseteq\mathfrak{p}$. Thus $M_{\mathfrak{p}}/I_{\mathfrak{p}}M_{\mathfrak{p}}\simeq L_{\mathfrak{p}/I}\simeq A_{\mathfrak{p}}/IA_{\mathfrak{p}}$ is a cyclic module over $A_{\mathfrak{p}}/IA_{\mathfrak{p}}$. So by the Nakayama lemma, we get that $M_{\mathfrak{p}}$ is a cyclic $A_{\mathfrak{p}}$-module. But $M_{\mathfrak{p}}$ is also a nonzero free $A_{\mathfrak{p}}$-module. This shows that $M_{\mathfrak{p}}\simeq A_{\mathfrak{p}}$. 

\section{Ideal class group of an extension vs Picard group}

In mathematics, especially in algebraic number theory, invertible ideals and the ideal class group are defined for integral domains, more precisely, for such an extension $A\subseteq F$ where $A$ is an integral domain and $F$ is its field of fractions. Later in \cite{Roberts-Singh} and \cite{Singh}, invertible ideals were initially studied for the more general setting of arbitrary extensions of rings. 

In this section, we further develop this theory to pave the way for deeper results in the subsequent sections. To achieve this goal, let $A\subseteq B$ be an extension of commutative rings. If $L$ and $L'$ are $A$-submodules of $B$ then their multiplication $LL'$, the set of all finite sums of the form $\sum\limits_{k=1}^{n}x_{k}y_{k}$ with $n\geqslant1$, $x_{k}\in L$ and $y_{k}\in L'$, is also an $A$-submodule of $B$. We call an $A$-submodule $L$ of $B$ an \emph{invertible ideal of the extension $A\subseteq B$} if there exists an $A$-submodule $L'$ of $B$ such that $LL'=A$. In this case, it can be easily seen that $L'=\{b\in B: Lb\subseteq A\}$. Therefore such $L'$ is unique and is denoted by $L^{-1}$.
 
The set of invertible ideals of a ring extension $A\subseteq B$ is an Abelian group under multiplication with the identity element $A$. 
Following \cite{Roberts-Singh}, we will denote this group by $\mathscr{G}(A,B)$. Then we have the following simple observations:

\begin{lemma}\label{Lemma pii 20} Let $A\subseteq B$ be an extension of rings and $b\in B$. Then $L=Ab$ is an invertible ideal of this extension if and only if $b\in B^{\ast}$. 
In this case, $L^{-1}=Ab^{-1}$.
\end{lemma}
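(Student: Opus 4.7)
The statement has three pieces: the two directions of the ``if and only if'' and the identification $L^{-1}=Ab^{-1}$. I would handle them in that order, using in an essential way the description $L^{-1}=\{b'\in B:Lb'\subseteq A\}$ recorded in the paragraph just before the lemma.

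For the ``if'' direction, assume $b\in B^{\ast}$ and take the candidate inverse $L'=Ab^{-1}$. Since the product $LL'$ consists of finite sums $\sum_{k}(a_{k}b)(a'_{k}b^{-1})=\sum_{k}a_{k}a'_{k}$ with $a_{k},a'_{k}\in A$, we obviously have $LL'\subseteq A$; for the reverse inclusion, any $a\in A$ equals $(ab)\cdot b^{-1}\in LL'$. Hence $LL'=A$, so $L=Ab$ is invertible.

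For the ``only if'' direction, suppose $L=Ab$ is invertible, so $LL'=A$ for some $A$-submodule $L'\subseteq B$. Since $1\in A=LL'$, we can write $1=\sum_{k=1}^{n}(a_{k}b)y_{k}$ with $a_{k}\in A$ and $y_{k}\in L'$. Setting $c=\sum_{k}a_{k}y_{k}\in B$ we obtain $bc=1$, which already shows $b\in B^{\ast}$ with $b^{-1}=c$. For the final identification, observe that
\[
L^{-1}=\{b'\in B:Ab\cdot b'\subseteq A\}=\{b'\in B:bb'\in A\},
\]
since the condition $abb'\in A$ for all $a\in A$ is equivalent to its specialization at $a=1$. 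The inclusion $Ab^{-1}\subseteq L^{-1}$ is immediate as $b\cdot(ab^{-1})=a\in A$; conversely, if $b'\in L^{-1}$ then $bb'=a\in A$ and therefore $b'=ab^{-1}\in Ab^{-1}$.

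\textbf{Main obstacle.} There is essentially none: the lemma is a direct unpacking of definitions. The only thing that requires a moment of care is making sure the product of $A$-submodules (finite sums of products) is handled correctly, and using the explicit form $L^{-1}=\{b'\in B:Lb'\subseteq A\}$ rather than trying to work with an arbitrary inverse $L'$ supplied by invertibility.
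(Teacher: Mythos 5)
Your proof is correct and the core argument for the nontrivial direction (writing $1=\sum_k (a_k b)y_k$ and factoring out $b$ to get $b\in B^{\ast}$) is exactly the paper's; the paper simply omits the routine verifications of the converse and of $L^{-1}=Ab^{-1}$, which you supply correctly.
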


\begin{proof} If $L=Ab$ is an invertible ideal then $LL'=A$ for some $A$-submodule $L'$ of $B$. Thus $1=\sum\limits_{k=1}^{n}x_{k}y_{k}$ with $x_{k}=a_{k}b\in L$ and $y_{k}\in L'$ for all $k$. Then $1=(\sum\limits_{k=1}^{n}a_{k}y_{k})b$. This shows that $b\in B^{\ast}$. 
\end{proof}

\begin{lemma} If $L_{1}$ and $L_{2}$ are invertible ideals of an extension of rings $A\subseteq B$ with   $L_{1}\subseteq L_{2}$, then $L^{-1}_{2}\subseteq L^{-1}_{1}$.
\end{lemma}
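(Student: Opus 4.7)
The plan is to use the explicit description of the inverse that was recorded immediately after the definition of an invertible ideal, namely that for any invertible ideal $L$ of the extension $A\subseteq B$ one has
\[
L^{-1}=\{b\in B : Lb\subseteq A\}.
\]
This characterization turns the statement into a triviality once we have it, so the only thing to be careful about is invoking it correctly for both $L_1$ and $L_2$.

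With that in hand, I would take an arbitrary element $b\in L_2^{-1}$ and verify that it lies in $L_1^{-1}$. By the above description of $L_2^{-1}$, the element $b$ satisfies $L_2 b\subseteq A$. Because $L_1\subseteq L_2$ by hypothesis, we immediately get $L_1 b\subseteq L_2 b\subseteq A$, and applying the description of $L_1^{-1}$ in the reverse direction yields $b\in L_1^{-1}$. Hence $L_2^{-1}\subseteq L_1^{-1}$, which is the claim.

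There is essentially no obstacle here: the whole content of the lemma is the monotonicity of the operator $L\mapsto\{b\in B:Lb\subseteq A\}$ under reverse inclusion, which is formal. The only mild subtlety is that the identity $L^{-1}=\{b\in B:Lb\subseteq A\}$ was asserted right after the definition of invertibility (it follows from $LL^{-1}=A$ together with the fact that if $Lb\subseteq A$ then $b=b\cdot 1\in L^{-1}LL^{-1}\subseteq A L^{-1}=L^{-1}$), so I would feel free to use it without further comment.
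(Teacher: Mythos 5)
Your proof is correct, but it takes a slightly different route from the paper's. The paper proves the lemma by a one-line computation entirely inside the monoid of $A$-submodules of $B$: $L_{2}^{-1}=L_{2}^{-1}A=L_{2}^{-1}L_{1}L_{1}^{-1}\subseteq L_{2}^{-1}L_{2}L_{1}^{-1}=AL_{1}^{-1}=L_{1}^{-1}$, never invoking the elementwise description of the inverse. You instead lean on the identity $L^{-1}=\{b\in B:Lb\subseteq A\}$ (which the paper does assert immediately after the definition) and reduce the lemma to the evident antitonicity of $L\mapsto\{b\in B:Lb\subseteq A\}$. Both arguments are formal consequences of $LL^{-1}=A$; the paper's has the small advantage of not needing the colon-module characterization at all, while yours makes transparent \emph{why} the statement is trivial, namely that the inverse is a residual. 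One nitpick: in your parenthetical justification of the characterization, the chain $b=b\cdot 1\in L^{-1}LL^{-1}$ presupposes $b\in L^{-1}$, which is what you are trying to show; the correct chain is $b=b\cdot 1\in b(LL^{-1})=(bL)L^{-1}\subseteq AL^{-1}=L^{-1}$. Since that identity is already recorded in the paper, this slip does not affect the validity of your main argument.
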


\begin{proof} We have $L^{-1}_{2}=L^{-1}_{2}A=L^{-1}_{2}L_{1}L^{-1}_{1}
\subseteq L^{-1}_{2}L_{2}L^{-1}_{1}=
AL^{-1}_{1}=L^{-1}_{1}$. 
\end{proof}
 
\begin{definition} Let $A\subseteq B$ be an extension of rings. The group of invertible ideals $\mathscr{G}(A,B)$ modulo $H=\{Ax: x\in B^{\ast}\}$, the subgroup of principal invertible ideals, is called \emph{the ideal class group of this extension} and is denoted by $\mathfrak{C}(A,B)$.
\end{definition}

For any extension of rings $A\subseteq B$ we have the following short exact sequence of Abelian groups: $$\xymatrix{0\ar[r]& B^{\ast}/A^{\ast}\ar[r]&\mathscr{G}(A,B)\ar[r]&
\mathfrak{C}(A,B)\ar[r]&0}$$
where the group morphism $B^{\ast}/A^{\ast}\rightarrow\mathscr{G}(A,B)$ is defined as $xA^{\ast}\mapsto Ax$.
 
To study these groups systematically, we need some simple categorical concepts. By the category of extensions of rings we mean a category
whose objects are the pairs $(A,B)$ where $A\subseteq B$ is an extension of rings, and whose morphisms are the arrows $\phi:(A,B)\rightarrow(A',B')$ where $\phi:B\rightarrow B'$ is a morphism of rings such that $\phi(A)\subseteq A'$. We then show that the group of invertible ideals and the ideal class group constructions are indeed covariant functors from this category to the category of Abelian group. 

To this end, let $\phi:(A,B)\rightarrow(A',B')$ be a morphism in the category of extensions of rings. If $L$ is an $A$-submodule of $B$ then $L_{\phi}:=\sum\limits_{x\in L}A'\phi(x)$ is an $A'$-submodule of $B'$. It is clear that $A_{\phi}=A'$ and $(LL')_{\phi}=L_{\phi}L'_{\phi}$ for any $A$-submodules $L$ and $L'$ of $B$. Thus if $L$ is an invertible ideal of the extension $A\subseteq B$ then $L_{\phi}$ is an invertible ideal of the extension $A'\subseteq B'$. In fact, the map $\mathscr{G}(A,B)\rightarrow\mathscr{G}(A',B')$ given by $L\mapsto L_{\phi}$ is a morphism of groups. If $x\in B^{\ast}$ then $\phi(x)\in (B')^{\ast}$ and $(Ax)_{\phi}=A'\phi(x)$.
Hence, $\phi$ induces the desired group morphism $\Cl(A,B)\rightarrow\Cl(A',B')$ which is given by $LH\mapsto L_{\phi}H'$ where $H=\{Ax: x\in B^{\ast}\}$ and $H'=\{A'x: x\in (B')^{\ast}\}$. 

If $L$ and $L'$ are $A$-submodules of $B'$, then their multiplication $LL'$ is defined similarly above which is an $A$-submodule of $B'$. In particular, if $L$ is an $A$-submodule of $B$ then $L_{\phi}=\phi(L)A'$. If $L$ is an $A$-submodule of $B$ and $S$ is a multiplicative subset of $A$, then $L_{\phi}=S^{-1}L$ where $\phi:B\rightarrow S^{-1}B$ is the canonical ring map.
In particular, if $L$ and $L'$ are $A$-submodules of $B$, then $S^{-1}(LL')=(S^{-1}L)(S^{-1}L')$. 

To prove the first main result of this article, we need the following key lemmas: 

\begin{lemma}\label{Lemma 5 besh} Let $\phi:(A,B)\rightarrow(A',B')$ be a morphism in the category of extensions of rings and $L$ be an invertible ideal of the extension $A\subseteq B$. Then for any $A$-submodule $L'$ of $B'$ we have the canonical isomorphism of $A$-modules $L\otimes_{A}L'\simeq\phi(L)L'=L_{\phi}L'$. In particular, $L\otimes_{A}A'\simeq L_{\phi}$ as $A'$-modules. 
\end{lemma}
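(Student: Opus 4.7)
The plan is to construct the isomorphism explicitly as the multiplication map and then use invertibility of $L$ to verify injectivity. Define
\[
\mu : L \otimes_{A} L' \longrightarrow B', \qquad x \otimes y \mapsto \phi(x)\,y.
\]
I would first check that $\mu$ is well-defined and $A$-bilinear: since $L'$ is an $A$-submodule of $B'$, its $A$-action is $a\cdot y = \phi(a)y$, so $\mu(ax \otimes y) = \phi(a)\phi(x)y = \phi(x)(a\cdot y) = \mu(x \otimes a y)$. By the very definition of the product of submodules, the image of $\mu$ is the set of finite sums $\sum \phi(x_k)y_k$ with $x_k \in L$, $y_k \in L'$, which is exactly $\phi(L)L' = L_{\phi}L'$. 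Hence surjectivity onto this target is automatic, and the whole content of the lemma reduces to showing that $\mu$ is injective.

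Injectivity is the one nontrivial step, and this is where the hypothesis $LL^{-1}=A$ is used. I would choose once and for all a finite presentation $1 = \sum_{i=1}^{n} x_i z_i$ with $x_i \in L$ and $z_i \in L^{-1}$, noting that each product $z_i a$ with $a \in L$ lies in $A$. Given any $\xi = \sum_{k} a_k \otimes b_k \in L \otimes_A L'$ with $\mu(\xi) = \sum_k \phi(a_k) b_k = 0$ in $B'$, I insert this expression for $1$ and move the $A$-scalars $z_i a_k$ across the tensor:
\[
\xi \;=\; \sum_{i,k}\bigl(x_i z_i a_k\bigr) \otimes b_k \;=\; \sum_i x_i \otimes \Bigl(\sum_k \phi(z_i a_k)\,b_k\Bigr).
\]
The inner sum equals $\phi(z_i)\,\mu(\xi) = \phi(z_i)\cdot 0 = 0$ in $B'$; and because each individual term $\phi(z_i a_k)b_k$ lies in $L'$ (since $z_i a_k \in A$ and $L'$ is an $A$-submodule of $B'$), this vanishing actually happens inside $L'$. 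Therefore $\xi = 0$.

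The ``in particular'' clause falls out by taking $L'=A'$, which is certainly an $A$-submodule of $B'$ through $\phi|_{A}\colon A \to A'$; then $L_\phi A' = L_\phi$, and the resulting bijection $L \otimes_A A' \to L_\phi$ is $A'$-linear because the target is an $A'$-module and $\mu$ intertwines the natural $A'$-actions on both sides. The main obstacle I expect is purely bookkeeping: one has to keep track of the two ambient rings $B$ and $B'$, recognise that $z_i a_k$ really is an element of $A$ (not merely of $B$) so that it can legitimately be moved across $\otimes_A$, and confirm that the identity $\sum_k \phi(z_i a_k) b_k = \phi(z_i)\mu(\xi)$ holds inside $L'$ rather than only inside $B'$. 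Once that is set up cleanly the rest of the verification is formal.
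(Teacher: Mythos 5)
Your proof is correct and follows essentially the same route as the paper's: the same multiplication map $x\otimes y\mapsto\phi(x)y$, surjectivity by definition of the product of submodules, and injectivity by writing $1=\sum_i x_i z_i$ with $x_i\in L$, $z_i\in L^{-1}$ and sliding the scalars $z_i a_k\in LL^{-1}=A$ across the tensor sign to factor out $\mu(\xi)=0$. The bookkeeping points you flag (that $z_i a_k$ genuinely lies in $A$, and that the $A$-action on $L'$ is via $\phi$) are exactly the ones the paper's computation relies on.
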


\begin{proof} We show that the canonical morphism of $A$-modules $L\otimes_{A}L'\rightarrow\phi(L)L'$ given by  $b\otimes b'\mapsto\phi(b)b'$ is an isomorphism. This map is clearly surjective. To see injectivity, suppose that $\sum\limits_{i=1}^{n}\phi(b_i)b'_{i}=0$ where $b_{i}\in L$ and $b'_{i}\in L'$ for all $i$. Since $LL^{-1}=A$, we may write $1=\sum\limits_{k=1}^{d}x_{k}y_{k}$ with $x_{k}\in L$ and $y_{k}\in L^{-1}$ for all $k$. Then in $L\otimes_{A}L'$ we have $\sum\limits_{i=1}^{n}b_{i}\otimes b'_{i}=
\sum\limits_{i=1}^{n}
\big(b_{i}(\sum\limits_{k=1}^{d}x_{k}y_{k})\big)
\otimes b'_{i}=
\sum\limits_{i=1}^{n}(\sum\limits_{k=1}^{d}
x_{k}b_{i}y_{k})\otimes b'_{i}$. But each $b_{i}y_{k}\in LL^{-1}=A$. Then $\sum\limits_{i=1}^{n}b_{i}\otimes b'_{i}=
\sum\limits_{i=1}^{n}\big(\sum\limits_{k=1}^{d}
(x_{k}b_{i}y_{k}\otimes b'_{i})\big)
=\sum\limits_{i=1}^{n}\big(\sum\limits_{k=1}^{d}
x_{k}\otimes\phi(b_{i}y_{k})b'_{i}\big)=
\sum\limits_{i=1}^{n}\big(\sum\limits_{k=1}^{d}
x_{k}\otimes\phi(b_{i})\phi(y_{k})b'_{i}\big)=
\sum\limits_{k=1}^{d}
x_{k}\otimes\big(\phi(y_{k})\sum\limits_{i=1}^{n}
\phi(b_{i})b'_{i}\big)=0$. This completes the proof. 
\end{proof}

\begin{corollary}\label{Lemma TRRS} Let $A\subseteq B$ be an extension of rings and $L$ be an invertible ideal of this extension. Then for any $A$-submodule $L'$ of $B$ we have the canonical isomorphism of $A$-modules $L\otimes_{A}L'\simeq LL'$. 
\end{corollary}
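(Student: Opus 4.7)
The plan is to deduce this corollary as an immediate special case of Lemma \ref{Lemma 5 besh}. I would take the morphism $\phi$ in the category of extensions of rings to be the identity morphism $\mathrm{id}:(A,B)\to(A,B)$; this is a legitimate morphism since the identity on $B$ certainly sends $A$ into $A$.

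With this choice, I first observe that for any $A$-submodule $L$ of $B$ one has $L_{\phi}=\sum_{x\in L}A\cdot\mathrm{id}(x)=\sum_{x\in L}Ax=L$, because $L$ is already an $A$-submodule. In particular $\phi(L)L'=LL'$ as $A$-submodules of $B$. Now I apply Lemma \ref{Lemma 5 besh} directly: since $L$ is an invertible ideal of $A\subseteq B$ and $L'$ is any $A$-submodule of $B$ (viewed as an $A$-submodule of $B'=B$), the lemma provides a canonical isomorphism of $A$-modules $L\otimes_{A}L'\simeq\phi(L)L'=L_{\phi}L'$, which in our situation reads $L\otimes_{A}L'\simeq LL'$, as required.

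Since the real content (the careful proof of injectivity using the representation $1=\sum x_{k}y_{k}$ with $x_{k}\in L$, $y_{k}\in L^{-1}$) is already established in Lemma \ref{Lemma 5 besh}, there is no genuine obstacle here beyond checking that the identity of $(A,B)$ fits the framework of the preceding lemma. The only thing worth flagging is the verification that $L_{\phi}=L$ for $\phi=\mathrm{id}$, which comes from the fact that $L$ is already closed under the $A$-action and hence equal to the $A$-submodule it generates inside $B$.
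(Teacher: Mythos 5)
Your proof is correct and is essentially the paper's own argument: the paper also deduces the corollary directly from Lemma \ref{Lemma 5 besh}, and specializing that lemma to the identity morphism $(A,B)\to(A,B)$ (with the observation that $L_{\phi}=L$ there) is exactly the intended instantiation. Nothing further is needed.
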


\begin{proof} It follows from Lemma \ref{Lemma 5 besh}.
\end{proof}

\begin{lemma}\label{key lemma} Let $A\subseteq B$ be an extension of rings and $L$ be an $A$-submodule of $B$. Then the following assertions are equivalent: \\
$\mathbf{(i)}$ $L$ is an invertible ideal of the extension $A\subseteq B$. \\
$\mathbf{(ii)}$  $L$ is a finitely generated projective $A$-module of rank 1 and $LB=B$. 
\end{lemma}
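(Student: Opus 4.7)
The plan is to prove the two implications separately. The direction (i) $\Rightarrow$ (ii) is essentially an application of the dual basis lemma starting from a decomposition of $1$, while (ii) $\Rightarrow$ (i) uses the hypothesis $LB=B$ to lift $A$-linear forms on $L$ to multiplications by elements of $B$; this lifting is the conceptual heart of the argument.

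For (i) $\Rightarrow$ (ii), I start from $LL^{-1}=A$ and pick a decomposition $1=\sum_{i=1}^{n}x_{i}y_{i}$ with $x_{i}\in L$ and $y_{i}\in L^{-1}$. Then for every $x\in L$, the identity $x=\sum_{i}x_{i}(xy_{i})$, with $xy_{i}\in LL^{-1}\subseteq A$, simultaneously shows that $L$ is generated over $A$ by the $x_{i}$ and that the $A$-linear forms $\mu_{i}\colon x\mapsto xy_{i}$ form, together with the $x_{i}$, a dual basis for $L$; so $L$ is a finitely generated projective $A$-module. Corollary \ref{Lemma TRRS} yields $L\otimes_{A}L^{-1}\simeq LL^{-1}=A$, and localizing this isomorphism at each $\mathfrak{p}\in\Spec(A)$ forces the rank of $L$ to be $1$ everywhere. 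Finally, since $L^{-1}\subseteq B$, we have $A=LL^{-1}\subseteq LB$, and $LB$ is an ideal of $B$ (it is a $B$-submodule of $B$), so containing $1$ it must equal $B$.

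For the converse (ii) $\Rightarrow$ (i), I set $L^{-1}:=\{b\in B : bL\subseteq A\}$, so that $LL^{-1}\subseteq A$ is automatic and the task reduces to showing $1\in LL^{-1}$. The rank-one projectivity of $L$ gives, via the isomorphism $L\otimes_{A}\widehat{L}\simeq A$ recalled in Section~2, elements $x_{1},\ldots,x_{n}\in L$ and $f_{1},\ldots,f_{n}\in\widehat{L}$ with $\sum_{i}f_{i}(x_{i})=1$. The main obstacle — where the hypothesis $LB=B$ enters essentially — is to realize each $f_{i}$ as multiplication by some $\tilde f_{i}\in B$, that is, $\tilde f_{i}\cdot x=f_{i}(x)$ for all $x\in L$. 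I would handle this by first observing that $L\otimes_{A}B$ is a line bundle over $B$ and that the canonical $B$-linear surjection $L\otimes_{A}B\to LB=B$, $x\otimes b\mapsto xb$, is in fact an isomorphism; indeed, localized at any $\mathfrak{q}\in\Spec(B)$ it becomes a surjection $B_{\mathfrak{q}}\to B_{\mathfrak{q}}$ of free rank-one modules, hence multiplication by a unit. Once this isomorphism is in hand, $f_{i}\otimes 1_{B}\colon L\otimes_{A}B\to A\otimes_{A}B=B$ becomes a $B$-linear endomorphism of $B$, i.e., multiplication by a unique $\tilde f_{i}\in B$ satisfying $\tilde f_{i}x=f_{i}(x)$ for all $x\in L$. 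Consequently $\tilde f_{i}\in L^{-1}$, and $\sum_{i}x_{i}\tilde f_{i}=\sum_{i}f_{i}(x_{i})=1$ lies in $LL^{-1}$, which closes the argument.
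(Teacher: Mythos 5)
Your proposal is correct and follows essentially the same route as the paper: for (i)$\Rightarrow$(ii) the same dual-basis decomposition of $1$ plus the localization of $L\otimes_{A}L^{-1}\simeq A$ to get rank one, and for (ii)$\Rightarrow$(i) the same crux, namely that the surjection $L\otimes_{A}B\to LB=B$ of rank-one projective $B$-modules is an isomorphism, which lets you represent the $A$-linear functionals on $L$ by multiplication by elements of $B$. The only (cosmetic) difference is that you package those functionals via $\widehat{L}$ and the evaluation isomorphism, whereas the paper extracts them as the components of a splitting $\psi:L\to A^{n}$ of a free presentation; the elements $b_{k}=(\psi\otimes 1_{B})(1)$ in the paper are exactly your $\tilde f_{i}$.
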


\begin{proof} (i)$\Rightarrow$(ii): Since $LL^{-1}=A$,  we may write $1=\sum\limits_{k=1}^{n}x_{k}y_{k}$ where $x_{k}\in L$ and $y_{k}\in L^{-1}$ for all $k$. If $x\in L$ then $xy_{k}\in LL^{-1}=A$ for all $k$. Thus $x=\sum\limits_{i=1}^{n}(xy_{k})x_{k}
\in\sum\limits_{k=1}^{n}Ax_{k}$. Then $L=\sum\limits_{k=1}^{n}Ax_{k}$ is a finitely generated $A$-module. We also have $B=AB=L(L^{-1}B)\subseteq LB\subseteq B$ and so $LB=B$. Then we show that $L$ is a projective $A$-module. Consider the surjective morphism of $A$-modules $\phi:A^{n}\rightarrow L$ which sends each unit vector $\epsilon_{k}\in A^{n}$ into $x_{k}$. Then the map $\psi:L\rightarrow A^{n}$ given by $\psi(x)=(xy_{1},\ldots,xy_{n})$ is a morphism of $A$-modules and $\phi\circ\psi$ is the identity map of $L$. Thus the short exact sequence $\xymatrix{0\ar[r]&\Ker\phi\ar[r]&A^{n}\ar[r]^{\phi}
&L\ar[r]&0}$ splits and so $L$ is a projective $A$-module. Next, we show that $L$ is of constant rank 1. It suffices to show that for each $\mathfrak{p}\in\Spec(A)$ then $L_{\mathfrak{p}}\simeq A_{\mathfrak{p}}$ as $A_{\mathfrak{p}}$-modules. By Corollary \ref{Lemma TRRS}, we have $L\otimes_{A}L^{-1}\simeq A$ as $A$-modules. This yields that $L_{\mathfrak{p}}\otimes_{A_{\mathfrak{p}}}
(L^{-1})_{\mathfrak{p}}
\simeq A_{\mathfrak{p}}$. But $L_{\mathfrak{p}}\simeq(A_{\mathfrak{p}})^{m}$ and $(L^{-1})_{\mathfrak{p}}\simeq(A_{\mathfrak{p}})^{n}$ for some natural numbers $m,n\geqslant0$, because every finitely generated projective (even flat) module over a local ring is a free module. It follows that $mn=1$ and so $m=n=1$. This shows that $L$ is of rank 1.  \\
(ii)$\Rightarrow$(i): Since $L$ is a finitely generated $A$-module, we have a surjective morphism of $A$-modules $\phi:A^{n}\rightarrow L$ for some $n\geqslant1$. Since $L$ is $A$-projective, this map splits. So there exists a morphism of $A$-modules $\psi:L\rightarrow A^{n}$ such that $\phi\psi:L\rightarrow L$ is the identity map. But $L\otimes_{A}B$ is a finitely generated projective $B$-module of rank 1. So the canonical surjective morphism of $B$-modules $L\otimes_{A}B\rightarrow LB=B$ is an isomorphism. Indeed, it can be readily verified that every surjective morphism of modules between finitely generated projective modules with the same rank maps is an isomorphism. So we may consider the map $\psi\otimes1_{B}:B=L\otimes_{A}B\rightarrow A^{n}\otimes_{A}B=B^{n}$. Assume $(\psi\otimes1_{B})(1)=(b_{1},\ldots,b_{n})$ then $\sum\limits_{k=1}^{n}b_{k}x_{k}=1$  where $x_{k}=\phi(\epsilon_{k})\in L$ for all $k$. Let $L'=\sum\limits_{k=1}^{n}Ab_{k}$. Then $A\subseteq LL'$. It is clear that $\psi\otimes1_{B}$ is a morphism of $B$-modules. So for each $k$ we have $(x_{k}b_{1},\ldots,x_{k}b_{n})=x_{k}(\psi\otimes1_{B})(1)=
(\psi\otimes1_{B})(x_{k})=
(\psi\otimes1_{B})(x_{k}\otimes1)=
\psi(x_{k})\otimes1=\psi(x_{k})\in A^{n}$. This shows that $x_{k}b_{i}\in A$ for all $i$ and so $LL'\subseteq A$.
\end{proof}

Note that, under the assumptions of Lemma \ref{Lemma 5 besh}, then $L\otimes_{A}B'\simeq B'$ as $B'$-modules. Indeed, $L\otimes_{A}B'\simeq\phi(L)B'
=\phi(L)A'B'=L_{\phi}B'=B'$.

\begin{corollary}\label{Remark two} Let $\phi:(A,B)\rightarrow(A',B')$ be a morphism in the category of extensions of rings. Then we have the following commutative diagram of Abelian groups: $$\xymatrix{
\mathscr{G}(A,B)\ar[r]\ar[d]&\Pic(A)\ar[d] \\ \mathscr{G}(A',B')\ar[r]&
\Pic(A').}$$
\end{corollary}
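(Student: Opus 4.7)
The plan is to unpack what the four arrows in the diagram are, then invoke the preceding lemmas to check that the square commutes.

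First I would describe the horizontal maps. By Lemma \ref{key lemma}, every invertible ideal $L$ of an extension $A\subseteq B$ is, in particular, a finitely generated projective $A$-module of constant rank $1$, so we get a well-defined assignment $\mathscr{G}(A,B)\to\Pic(A)$ sending $L\mapsto[L]$. To see this is a group morphism, I would apply Corollary \ref{Lemma TRRS}: if $L_1,L_2$ are invertible ideals of $A\subseteq B$, then $L_1\otimes_A L_2\simeq L_1 L_2$ canonically, and also $L_2$ is itself an $A$-submodule of $B$. This gives $[L_1]\cdot[L_2]=[L_1\otimes_A L_2]=[L_1 L_2]$, while the identity $A\in\mathscr{G}(A,B)$ is sent to $[A]$, the neutral element of $\Pic(A)$. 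The same construction defines the bottom horizontal map $\mathscr{G}(A',B')\to\Pic(A')$.

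The vertical maps are already available: on the left, the functoriality of $\mathscr{G}$ was established just before the statement, sending $L\mapsto L_\phi=\sum_{x\in L}A'\phi(x)$; on the right, $\Pic(\phi)$ sends $M\mapsto M\otimes_A A'$.

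For commutativity I would chase an element $L\in\mathscr{G}(A,B)$ around the square. Going right then down yields the class of $L\otimes_A A'$ in $\Pic(A')$. Going down then right yields the class of $L_\phi$ in $\Pic(A')$ (this class is well-defined because, again by Lemma \ref{key lemma}, $L_\phi$ is a line bundle over $A'$). Thus the commutativity reduces exactly to the second statement of Lemma \ref{Lemma 5 besh}, which gives the canonical isomorphism $L\otimes_A A'\simeq L_\phi$ of $A'$-modules.

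There is no genuine obstacle here; the only thing one might worry about is whether the horizontal arrow really is multiplicative (i.e., whether the tensor product and the ideal product agree for two invertible ideals of the \emph{same} extension, not just for an invertible ideal paired with an arbitrary submodule), but this is precisely the content of Corollary \ref{Lemma TRRS} applied with $L'$ being a second invertible ideal. So the proof is essentially a one-line citation of Lemma \ref{Lemma 5 besh} together with Corollary \ref{Lemma TRRS} to know the horizontal maps make sense as group morphisms.
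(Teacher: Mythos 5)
Your argument is correct and follows the paper's own proof exactly: Lemma \ref{key lemma} together with Corollary \ref{Lemma TRRS} to establish that $L\mapsto[L]$ is a group morphism, and the isomorphism $L\otimes_{A}A'\simeq L_{\phi}$ from Lemma \ref{Lemma 5 besh} for commutativity of the square. Your write-up is simply a more detailed unpacking of the same citations.
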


\begin{proof} By Lemma \ref{key lemma} and Corollary \ref{Lemma TRRS}, the map $\mathscr{G}(A,B)\rightarrow\Pic(A)$ given by $L\mapsto[L]$ is a morphism of groups.  
If $L\in\mathscr{G}(A,B)$ then by Lemma \ref{Lemma 5 besh}, we have $L\otimes_{A}A'\simeq L_{\phi}$ as $A'$-modules. Thus the above diagram is commutative. 
\end{proof}

Note that Lemmas \ref{Lemma 5 besh} and \ref{key lemma} also improve \cite[Lemmas 2.2, 2.3]{Roberts-Singh}.

\begin{remark}\label{Remark bir} It is important to note that the group of invertible ideals and the ideal class group behave much better with extensions of rings than with arbitrary morphisms of rings. In particular, the notion of an invertible ideal of an extension of rings cannot be generalized to an arbitrary morphism of rings in a satisfactory way that the essential property of invertible ideals, namely projectivity, is preserved. \\ 
More precisely, for a given morphism of rings $f:A\rightarrow B$ we may define an $A$-submodule $L$ of $B$ as an invertible ideal of this map if $LL'=f(A)$ for some $A$-submodule $L'$ of $B$. Then in this case, it can be easily seen that $L$ is a finitely generated $A$-module and $LB=B$. But an invertible ideal of the map $f$ is not necessarily $A$-projective. As an example, consider the canonical ring map $f:\mathbb{Z}\rightarrow\mathbb{Z}/2$, then $f(\mathbb{Z})=\mathbb{Z}/2$ is an invertible ideal of this map but it is not $\mathbb{Z}$-projective (even it is not $\mathbb{Z}$-flat). \\
In addition, it is important to note that every invertible ideal of a ring map $f:A\rightarrow B$ is precisely an invertible ideal of the induced ring extension $f(A)\subseteq B$. Thus the groups of invertible ideals of $f$ and the induced ring extension $f(A)\subseteq B$ are the same. The same goes for the ideal class groups. \\
Hence, in practice, there is no advantage in generalizing invertible ideals to arbitrary ring maps. It is therefore logical to concentrate our investigation mainly on the extensions of the rings only, as we will do in the article.
\end{remark}

We are now ready to prove the first main result of this article:

\begin{theorem}\label{Thm TRS 2} For any extension of rings $A\subseteq B$ we have the following exact sequence of Abelian groups: $$\xymatrix{0\ar[r]&
\mathfrak{C}(A,B)\ar[r]&\Pic(A)\ar[r]&\Pic(B).}$$
\end{theorem}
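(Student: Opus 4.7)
The plan is to work with the composite map $\mathscr{G}(A,B)\to\Pic(A)\to\Pic(B)$ that has already been set up in Corollary \ref{Remark two}, and to show three things: (a) the principal invertible ideals $H=\{Ax:x\in B^{\ast}\}$ are sent to the trivial class, producing a well-defined map $\mathfrak{C}(A,B)\to\Pic(A)$; (b) that map is injective; (c) its image is exactly the kernel of $\Pic(A)\to\Pic(B)$.

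For (a) and (b): if $x\in B^{\ast}$ then the $A$-linear map $A\to Ax$, $a\mapsto ax$, is an isomorphism (injective because $x$ is a non-zero-divisor in $B$), so $[Ax]$ is trivial in $\Pic(A)$. Conversely, if $L\in\mathscr{G}(A,B)$ satisfies $L\simeq A$ as $A$-modules via some isomorphism $\phi:A\to L$, then setting $x=\phi(1)\in L\subseteq B$ one has $L=Ax$; by Lemma \ref{Lemma pii 20} the invertibility of $L=Ax$ forces $x\in B^{\ast}$, so $L\in H$. This gives both the factorization through $\mathfrak{C}(A,B)$ and its injectivity.

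For the image being contained in the kernel, this is exactly the remark recorded just after Corollary \ref{Lemma TRRS}: applying Lemma \ref{Lemma 5 besh} to the identity morphism $(A,B)\to(A,B)$ (or directly using $LB=B$ from Lemma \ref{key lemma} together with the isomorphism $L\otimes_{A}B\to LB$ of Lemma \ref{Lemma 5 besh}) yields $L\otimes_{A}B\simeq B$ as $B$-modules.

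The main obstacle is the reverse inclusion: given $[M]\in\Pic(A)$ with $M\otimes_{A}B\simeq B$ as $B$-modules, produce an invertible ideal $L$ of the extension $A\subseteq B$ with $[L]=[M]$. I would fix a $B$-linear isomorphism $\alpha:M\otimes_{A}B\to B$ and consider the composite
\[
f:M\xrightarrow{\,m\mapsto m\otimes 1\,}M\otimes_{A}B\xrightarrow{\,\alpha\,}B.
\]
Since $M$ is projective (hence flat) over $A$ and $A\hookrightarrow B$ is injective, the first arrow is injective, so $f$ is injective and $L:=f(M)$ is an $A$-submodule of $B$ which is $A$-isomorphic to $M$; in particular $L$ is a finitely generated projective $A$-module of rank one. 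Because $\alpha$ is $B$-linear and surjective one has $\alpha(m\otimes b)=bf(m)$, hence the $B$-submodule of $B$ generated by $L$ equals $\alpha(M\otimes_{A}B)=B$, i.e. $LB=B$. Thus $L$ satisfies the hypotheses of Lemma \ref{key lemma}(ii), so $L$ is an invertible ideal of the extension $A\subseteq B$, completing exactness at $\Pic(A)$. The slightly delicate points here are ensuring $f$ is injective (which is why one needs the flatness of line bundles) and computing that the submodule $L$ constructed from an abstract trivialization really satisfies the generating condition $LB=B$.
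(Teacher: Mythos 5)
Your proposal is correct and follows essentially the same route as the paper's own proof: the same identification of the kernel $H$ via Lemma \ref{Lemma pii 20}, the same use of Corollary \ref{Lemma TRRS} to show invertible ideals die in $\Pic(B)$, and the same construction of $L=f(M)$ from a $B$-linear trivialization, using flatness of $M$ for injectivity of $m\mapsto m\otimes 1$ and $B$-linearity of $\alpha$ to get $LB=B$ before invoking Lemma \ref{key lemma}. No gaps; this matches the paper's argument step for step.
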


\begin{proof} We observed that the map $\mathscr{G}(A,B)\rightarrow\Pic(A)$ given by $L\mapsto[L]$ is a morphism of groups. It is not hard to see that $H=\{Ax: x \in B^{\ast}\}$ is the kernel of this map. Indeed, if $x\in B^{\ast}$ then the map $A\rightarrow Ax$ given by $a\mapsto ax$ is an isomorphism of $A$-modules, and so $[Ax]=[A]$ is the identity element of the group $\Pic(A)$. Conversely, assume $L$ is an invertible ideal of the extension $A\subseteq B$ and $h:A\rightarrow L$ is an isomorphism of $A$-modules. Then $L=Ax$ where $x=h(1)$. By Lemma \ref{Lemma pii 20}, $x\in B^{\ast}$. Then $L=Ax\in H$. 
Hence, the map $\mathscr{G}(A,B)\rightarrow\Pic(A)$ induces an injective morphism of groups $\phi:\Cl(A,B)\rightarrow\Pic(A)$. \\ 
We know that the map $\psi=\Pic(j):\Pic(A)\rightarrow\Pic(B)$ given by $L\mapsto L\otimes_{A}B$ is a morphism of groups where $j:A\rightarrow B$ is the inclusion ring map. If $L$ is an invertible ideal of the extension $A\subseteq B$, then by Corollary \ref{Lemma TRRS}, $L\otimes_{A}B\simeq LB=B$ as $A$-modules. It is indeed an isomorphism as $B$-modules, and so $[L\otimes_{A}B]=[B]$ is the identity element of the group $\Pic(B)$. This shows that $\Ima(\phi)\subseteq\Ker(\psi)$. \\
Conversely, take $L\in\Ker(\psi)$. Then $L\otimes_{A}B\simeq B$ as $B$-modules. Thus we have an isomorphism of $B$-modules $f:L\otimes_{A}B\rightarrow B$. Since $L$ is $A$-flat, the canonical morphism of $A$-modules $g:L\rightarrow L\otimes_{A}B$ given by $x\mapsto x\otimes1$ is injective. Then $L$ is isomorphic to $L'=\Ima(fg)$ as $A$-modules. Thus the $A$-submodule $L'$ of $B$ is a finitely generated projective $A$-module of rank 1. We also have $L'B=B$. Indeed, if $b\in B$ then we may write $b=f(\sum\limits_{k=1}^{n}x_{k}\otimes b_{k})=\sum\limits_{k=1}^{n}f(x_{k}\otimes b_{k})=\sum\limits_{k=1}^{n}
f\big(b_{k}\cdot(x_{k}\otimes1)\big)$ where $x_{k}\in L$ and $b_{k}\in B$ for all $k$. But the map $f$ is a morphism of $B$-modules (i.e., it is $B$-linear), thus $b=\sum\limits_{k=1}^{n}
b_{k}f(x_{k}\otimes1)=\sum\limits_{k=1}^{n}
b_{k}fg(x_{k})\in L'B$. Then by Lemma \ref{key lemma}, $L'$ is an invertible ideal of the extension $A\subseteq B$. This shows that $\Ker(\psi)\subseteq\Ima(\phi)$.
This completes the proof. 
\end{proof}

As an immediate consequence, for any extension of rings $A\subseteq B$ then $\Cl(A,B)=0$ if and only if the natural group morphism $\Pic(A)\rightarrow\Pic(B)$ is injective.

We say that an (injective) morphism of rings $f: A\rightarrow B$ has a retraction if there exists a morphism of rings $g:B\rightarrow A$ such that $gf:A\rightarrow A$ is the identity map. For example, for any ring $A$, the ring extensions $A\subseteq A[x]$ and $A\subseteq A[x,x^{-1}]$  have retractions. 
As another example, for any ring map $R\rightarrow S$, then the canonical ring extension $S\rightarrow S\otimes_{R}S$ given by $s\mapsto s\otimes1$ has a retraction. It fact, the canonical ring map $S\otimes_{R}S\rightarrow S$ given by $s\otimes s'\mapsto ss'$ is its retraction. The ring extension $S\rightarrow S\otimes_{R}S$ given by $s\mapsto 1\otimes s$ has also the same retraction.

\begin{corollary}\label{Lemma TH 2} If an extension of rings $A\subseteq B$ has a retraction, then  $\Cl(A,B)=0$. 
\end{corollary}

\begin{proof} There exists a ring map $g:B\rightarrow A$ such that $gf:A\rightarrow A$ is the identity map where $f:A\rightarrow B$ is the inclusion map. Then by the functorial property of the Picard group,  $\Pic(gf)=\Pic(g)\circ\Pic(f):\Pic(A)\rightarrow\Pic(A)$ is the identity map. This shows that the map $\Pic(f):\Pic(A)\rightarrow\Pic(B)$ is injective. Then by Theorem \ref{Thm TRS 2}, we have $\Cl(A,B)=0$.  
\end{proof}

\begin{corollary} If $A$ is a ring and $M$ is an Abelian group or more generally a commutative monoid, then $\Cl(A, A[M])=0$.
\end{corollary}

\begin{proof} The extension of rings $A\subseteq A[M]$ has a retraction. Indeed, by the universal property of monoid rings, the
(augmentation) map $A[M]=\bigoplus\limits_{x\in M}A\rightarrow A$ given by $\sum\limits_{x\in M}r_{x}\epsilon_{x} \mapsto\sum\limits_{x\in M}r_{x}$ is a morphism of rings where $\epsilon_{x}=(\delta_{x,y})_{y\in M}$ and $\delta_{x,y}$ is the Kronecker delta.
Then by Corollary \ref{Lemma TH 2}, $\Cl(A, A[M])=0$.
\end{proof}

In particular, for any ring $A$ we have $\Cl(A, A[x])=\Cl(A, A[x,x^{-1}])=0$. It can be also seen that the extension of rings $A\subseteq A[[x]]$ has a retraction and so $\Cl(A, A[[x]])=0$.  

\begin{example} We show with an example that Corollary \ref{Lemma TH 2} is in its full generality. This means that it cannot be further generalized to split ring maps. By a split ring map we mean an (injective) morphism of rings $f:A\rightarrow B$ that splits as $A$-module (i.e., there exists a morphism of $A$-modules $g:B\rightarrow A$ such that $gf$ is the identity map). It is clear that split extensions are natural generalizations of ring extensions which have retractions. This example also shows that there are extensions of rings $A\subseteq B$ such that $B$ is a free $A$-module and has a basis containing the unit element 1 (such extensions are special cases of split extensions), but
$\Cl(A,B)\neq0$. The ring extension $A=\mathbb{R}[x,y]/(x^{2}+y^{2}-1)\subseteq B=\mathbb{C}[x,y]/(x^{2}+y^{2}-1)$ is split. In fact, $B\simeq\mathbb{C}\otimes_{\mathbb{R}}A$ is a free $A$-module on basis $1, i$. If $u=x+iy$ and $v=x-iy$ then $B=\mathbb{C}[u,v]/(uv-1)\simeq\mathbb{C}[u,u^{-1}]$ is a UFD and so $\Pic(B)=0$. Then by Theorem \ref{Thm TRS 2}, $\Cl(A,B)\simeq\Pic(A)$. But it is well known that $\Pic(A)$ is isomorphic to the additive group $\mathbb{Z}/2$.
\end{example}  

\begin{corollary}\label{coro TrS 2025} For any extension of rings $A\subseteq B$ we have the following exact sequence of Abelian groups: $$\xymatrix{0\ar[r]&B^{\ast}/A^{\ast}\ar[r]&
\mathscr{G}(A,B)\ar[r]&\Pic(A)\ar[r]&\Pic(B).}$$
\end{corollary}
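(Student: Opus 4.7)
The plan is to splice two short exact sequences already available in the excerpt. The very definition of the ideal class group yields the short exact sequence
$$\xymatrix{0\ar[r]& B^{\ast}/A^{\ast}\ar[r]^{\alpha}&\mathscr{G}(A,B)\ar[r]^{\pi}&\mathfrak{C}(A,B)\ar[r]&0,}$$
where $\alpha(xA^{\ast})=Ax$ and $\pi$ is the canonical quotient. Theorem \ref{Thm TRS 2} provides the exact sequence
$$\xymatrix{0\ar[r]&\mathfrak{C}(A,B)\ar[r]^{\phi}&\Pic(A)\ar[r]^{\psi}&\Pic(B),}$$
and the composite $\phi\circ\pi:\mathscr{G}(A,B)\to\Pic(A)$ is precisely the group morphism $L\mapsto[L]$ whose kernel was computed in the proof of Theorem \ref{Thm TRS 2}. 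Thus the four-term sequence in the statement arises simply by gluing these two sequences together at $\mathfrak{C}(A,B)$.

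What remains is to verify exactness at each node. Injectivity of $\alpha$ was already recorded in the excerpt (if $Ax=A$ inside $B$, then $x=x\cdot 1\in A$ and $ax=1$ for some $a\in A$, so $x\in A^{\ast}$). For exactness at $\mathscr{G}(A,B)$, since $\phi$ is injective, $\Ker(\phi\circ\pi)=\Ker(\pi)=\Ima(\alpha)$. For exactness at $\Pic(A)$, since $\pi$ is surjective, $\Ima(\phi\circ\pi)=\Ima(\phi)=\Ker(\psi)$ by Theorem \ref{Thm TRS 2}. Each identification is a one-line diagram chase.

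There is no genuine obstacle beyond bookkeeping; the only thing to be careful about is matching the maps in the two short exact sequences so that the composite really is the one giving $L\mapsto[L]$. This is immediate from the construction: $\pi$ sends $L$ to its class modulo principal invertible ideals, and $\phi$ sends that class to $[L]\in\Pic(A)$, which is well-defined precisely because principal invertible ideals $Ax$ with $x\in B^{\ast}$ are free of rank one over $A$ (Lemma \ref{Lemma pii 20} together with the isomorphism $A\to Ax$, $a\mapsto ax$).
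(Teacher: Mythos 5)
Your proof is correct and matches the paper's intent: the paper's proof is just the one-line ``It follows from Theorem \ref{Thm TRS 2},'' and the intended argument is precisely the splicing you carry out, gluing the defining short exact sequence $0\rightarrow B^{\ast}/A^{\ast}\rightarrow\mathscr{G}(A,B)\rightarrow\mathfrak{C}(A,B)\rightarrow0$ onto the sequence of Theorem \ref{Thm TRS 2} at $\mathfrak{C}(A,B)$. No gaps; you have simply written out the bookkeeping the paper leaves implicit.
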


\begin{proof} It follows from Theorem \ref{Thm TRS 2}.
\end{proof}

Let $A$ be a ring. If we take $B=T(A)$, the total ring of fractions of $A$, then every invertible ideal of this extension is simply called an invertible ideal of $A$, and the group
$\Cl(A,B)$ is also called the ideal class group of $A$ and is denoted by $\Cl(A)$. It is easy to see that every invertible ideal $L$ of $A$ is a fractional ideal of $A$. This means that $L$ is an $A$-submodule of $T(A)$ such that $sL\subseteq A$ for some non-zero-divisor $s\in A$. In particular, the classical ideal class group (i.e., when $A$ is an integral domain) is naturally recovered from this construction. 

\begin{corollary}\label{Coro 1 new T} For any ring $A$ we have the following exact sequence of Abelian groups $\xymatrix{0\ar[r]&\mathfrak{C}(A)\ar[r]&\Pic(A)\ar[r]
&\Pic(T(A)).}$
\end{corollary}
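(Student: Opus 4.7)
The plan is to observe that this corollary is a direct specialization of Theorem \ref{Thm TRS 2}. The canonical localization map $A \to T(A) = S^{-1}A$, where $S$ denotes the set of non-zero-divisors of $A$, is injective since every element of $S$ is a non-zero-divisor; hence $A \subseteq T(A)$ is genuinely an extension of commutative rings in the sense of the preceding results.

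Having verified that $A \subseteq T(A)$ lives in the category of extensions of rings, I would simply apply Theorem \ref{Thm TRS 2} to this particular extension. That theorem yields the exact sequence
$$\xymatrix{0\ar[r]&\mathfrak{C}(A,T(A))\ar[r]&\Pic(A)\ar[r]&\Pic(T(A)).}$$
By the convention fixed immediately before the statement of the corollary, namely $\mathfrak{C}(A) := \mathfrak{C}(A, T(A))$, the first non-trivial term is exactly $\mathfrak{C}(A)$, and the exact sequence above is precisely the sequence we wish to exhibit.

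There is essentially no obstacle here beyond unpacking the notation: all the real content (the injectivity of the map out of the ideal class group, the identification of its image with $\Ker(\Pic(A)\to\Pic(B))$ via Lemma \ref{key lemma}, Corollary \ref{Lemma TRRS}, and Lemma \ref{Lemma pii 20}) has already been carried out in the proof of Theorem \ref{Thm TRS 2}. The only thing worth remarking is that the construction $A \mapsto T(A)$ is defined for every commutative ring, so no additional hypotheses on $A$ (such as being a domain or being reduced with finitely many minimal primes) are needed for the sequence itself to make sense and to be exact; such hypotheses will only be required later to identify the outgoing map or to force its surjectivity or triviality.
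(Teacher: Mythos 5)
Your proposal is correct and matches the paper's proof exactly: the corollary is obtained by applying Theorem \ref{Thm TRS 2} to the extension $A\subseteq T(A)$ and invoking the convention $\mathfrak{C}(A)=\mathfrak{C}(A,T(A))$. Your additional remarks (injectivity of $A\to T(A)$, no extra hypotheses needed) are accurate but not required.
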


\begin{proof} It is clear from Theorem \ref{Thm TRS 2}.
\end{proof}

The sequences in Theorem \ref{Thm TRS 2} and Corollary \ref{Coro 1 new T}, in general, are not right exact (see Example \ref{Example one}).

\section{Geometric aspects of invertible modules}

In this section, we first prove the following important result, the proof of which is based entirely on geometrical ideas:

\begin{theorem}\label{Lemma TD 6} Every commutative ring has a faithfully flat ring extension whose Picard group is trivial.
\end{theorem}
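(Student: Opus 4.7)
The plan follows the Deligne--Hochster strategy sketched in the introduction: trivialize a single line bundle by base-changing to its torsor algebra, do this simultaneously for all line bundles, iterate, and pass to a direct limit. I would organize the argument in three stages.

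\emph{Stage one: trivializing a single line bundle.} For $L \in \Pic(A)$ set $R_{L} := \bigoplus_{n \in \mathbb{Z}} L^{\otimes n}$; this is a commutative $\mathbb{Z}$-graded $A$-algebra by Lemma \ref{Lemma 7 nf}. I would verify that $R_{L}$ is faithfully flat over $A$ and that $L \otimes_{A} R_{L}$ is free of rank one over $R_{L}$. Both assertions are local on $\Spec A$: choose $b_{1}, \dots, b_{m} \in A$ generating the unit ideal and trivializing $L$, with local generators $z_{i}$ of $L_{b_{i}}$; the canonical pairing $L \otimes_{A} \widehat{L} \to A$ supplies duals $z_{i}^{\ast} \in \widehat{L}_{b_{i}}$ satisfying $z_{i}^{\ast}(z_{i}) = 1$. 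The homogeneous elements $z_{i}$ and $z_{i}^{\ast}$, of degrees $+1$ and $-1$, are mutual inverses in $(R_{L})_{b_{i}}$, which identifies $(R_{L})_{b_{i}}$ with the Laurent polynomial ring $A_{b_{i}}[t_{i}, t_{i}^{-1}]$; this is faithfully flat, and the submodule $L_{b_{i}} \otimes_{A_{b_{i}}} (R_{L})_{b_{i}}$ is generated by the unit $z_{i} \otimes 1$, so $L \otimes_{A} R_{L}$ is principal and free over $R_{L}$.

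\emph{Stage two: trivializing all line bundles and iterating.} Form $A' := \bigotimes_{[L] \in \Pic(A)} R_{L}$, the tensor product taken over $A$ and indexed by the set of isomorphism classes of line bundles, interpreted as the filtered colimit of its finite subtensor products. Since faithful flatness is preserved by tensor products and filtered colimits, $A'$ is faithfully flat over $A$; applying Stage one to each factor (and noting that trivializations persist under further flat base change) shows that the induced map $\Pic(A) \to \Pic(A')$ is the zero map. Beginning with $A_{0} := A$ and iterating produces a chain of faithfully flat extensions $A_{0} \subseteq A_{1} \subseteq A_{2} \subseteq \cdots$ in which every class in $\Pic(A_{n})$ dies in $\Pic(A_{n+1})$.

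\emph{Stage three and the anticipated obstacle.} Let $B := \varinjlim_{n} A_{n}$. As a directed colimit of faithfully flat $A$-algebras, $B$ is faithfully flat over $A$. Using that the Picard group commutes with directed colimits of rings (recalled in the Preliminaries), every class in $\Pic(B)$ descends to some class in $\Pic(A_{n})$, is killed in $\Pic(A_{n+1})$, and therefore vanishes in $\Pic(B)$; thus $\Pic(B) = 0$. The step I expect to require the most care is Stage one: making the local identification $(R_{L})_{b_{i}} \simeq A_{b_{i}}[t_{i}, t_{i}^{-1}]$ fully rigorous with respect to the grading, and deducing from the local triviality of $L \otimes_{A} R_{L}$ its global triviality as an $R_{L}$-module. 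This is precisely the algebraic embodiment of Deligne's geometric principle that a $\mathbb{G}_{\mathrm{m}}$-torsor is universally trivialized by pullback to itself.
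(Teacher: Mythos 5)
Your proposal is correct and follows essentially the same route as the paper: the torsor algebra $R_{L}=\bigoplus_{n\in\mathbb{Z}}L^{\otimes n}$ for each line bundle, the tensor product over all classes realized as a filtered colimit of finite subproducts, iteration, and passage to the direct limit using that $\Pic$ commutes with directed colimits. The only divergence is in your Stage one: where you localize to identify $(R_{L})_{b_{i}}$ with a Laurent polynomial ring and then flag the passage from local to global triviality as the delicate point, the paper simply observes that $L\widehat{L}=A$ inside $R_{L}$, so that $L$ is an invertible ideal of the extension $A\subseteq R_{L}$ and Theorem \ref{Thm TRS 2} kills its class in $\Pic(R_{L})$ --- which also closes the gap you flagged, since then $L\otimes_{A}R_{L}\simeq LR_{L}=R_{L}$ globally.
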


\begin{proof} Let $A$ be a ring. If $L\in\Pic(A)$ then 
there are finitely many elements $f_{1},\ldots, f_{n}\in A$ which generate the unit ideal of $A$ and $L_{f_{i}}\simeq A_{f_{i}}$ as $A_{f_{i}}$-modules for all $i$. Then consider the direct product ring $A_L:=A_{f_{1}}\times\cdots\times A_{f_{n}}$ which is a faithfully flat ring extension of $A$, and $L$ is vanished under the canonical group map $\Pic(A)\rightarrow\Pic(A_L)$. \\ 
Therefore $L$ is also vanished in every extension ring of $A_L$. 
Thus every finite subset $J=\{L_{1},\ldots, L_{n}\}$ of $\Pic(A)$ is contained in the kernel of the canonical group map $\Pic(A)\rightarrow\Pic(R_J)$ induced by the faithfully flat ring extension $A\subseteq  R_J:=\bigotimes\limits_{L\in J}A_L=A_{L_{1}}\otimes_{A}
\cdots\otimes_{A}A_{L_{n}}$. 

Note that if $J$ is the empty set then $R_J=A$, and  if $J\subseteq J'$ are finite subsets of $\Pic(A)$ then we have a canonical ring map $f_{J,J'}:R_J\rightarrow R_{J'}$. Clearly the rings $R_J$, together with the maps $f_{J,J'}$ as transition morphisms, is a direct system of rings over the directed poset of all finite subsets of $\Pic(A)$. Let $A_1=\lim\limits_{\overrightarrow{J}}R_J$ be the direct limit of this system. Therefore, for any ring $A$ we can find a faithfully flat ring extension $A_1$ of $A$ such that the canonical group map $\Pic(A)\rightarrow\Pic(A_{1})$ is zero. \\
Then by induction, we obtain a chain of (faithfully flat) extensions of rings $A_{0}=A\subseteq A_{1}\subseteq A_{2}\subseteq\cdots$ such that for every natural number $n\geqslant0$ the canonical group map $\Pic(A_{n})\rightarrow\Pic(A_{n+1})$ is zero. Since all the transition morphisms $\Pic(A_{m})\rightarrow\Pic(A_{n})$ are zero for $m<n$, we conclude that the direct limit of this system is trivial, i.e., $\lim\limits_{\overrightarrow{n\geqslant0}}\Pic(A_n)=0$.
Setting $B=\lim\limits_{\overrightarrow{n\geqslant0}}A_n$ which is a faithfully flat ring extension of $A$. Since the Picard group commutes with direct limits, we get that $\Pic(B)\simeq\lim\limits_{\overrightarrow{n\geqslant0}}
\Pic(A_n)=0$. This completes the proof. 
\end{proof}

The Picard group can be realized as the ideal class group: 

\begin{corollary}\label{Coro culmination} The Picard group of every commutative ring is canonically isomorphic to the ideal class group of a faithfully flat extension of that ring. 
\end{corollary}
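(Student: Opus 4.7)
The plan is to combine the two main results established so far: the exact sequence of Theorem \ref{Thm TRS 2} together with the trivialization result of Theorem \ref{Lemma TD 6}. Given any commutative ring $A$, I would first invoke Theorem \ref{Lemma TD 6} to produce a (faithfully flat) ring extension $A\subseteq B$ with $\Pic(B)=0$. This $B$ will serve as the ``certain extension'' mentioned in the statement.

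With this choice of $B$, the exact sequence of Theorem \ref{Thm TRS 2} specializes to
$$\xymatrix{0\ar[r]&\mathfrak{C}(A,B)\ar[r]&\Pic(A)\ar[r]&0,}$$
so the canonical injection $\mathfrak{C}(A,B)\hookrightarrow\Pic(A)$ is forced to be surjective as well, hence an isomorphism. The resulting identification is canonical in the sense dictated by the explicit formulas that appeared in the proof of Theorem \ref{Thm TRS 2}: the class of an invertible ideal $L$ of the extension maps to its $A$-module isomorphism class $[L]\in\Pic(A)$; conversely, starting from $[L]\in\Pic(A)$ (whose image in $\Pic(B)$ is automatically trivial), one recovers an invertible ideal of the extension as $\Ima(fg)\subseteq B$, where $g\colon L\rightarrow L\otimes_{A}B$ is the canonical $A$-flat injection and $f\colon L\otimes_{A}B\rightarrow B$ is any chosen trivialization.

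I do not anticipate any serious obstacle here, because both ingredients are already in hand and the statement is essentially a direct consequence of their combination. The only mild subtlety worth flagging is that the extension $B$ is not unique and neither is the trivialization $f$; however, the injectivity half of Theorem \ref{Thm TRS 2} shows that the class of the resulting invertible ideal in $\mathfrak{C}(A,B)$ depends only on $[L]\in\Pic(A)$, so the ambiguity is harmless.
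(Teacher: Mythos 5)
Your proof is correct and follows essentially the same route as the paper: combine Theorem \ref{Thm TRS 2} with Theorem \ref{Lemma TD 6} to get an extension $B$ with $\Pic(B)=0$, forcing the canonical injection $\mathfrak{C}(A,B)\hookrightarrow\Pic(A)$ to be an isomorphism. The paper additionally notes one can get by with the weaker intermediate fact that some extension $A_1$ already kills the map $\Pic(A)\rightarrow\Pic(A_1)$, but this is only a minor economy, not a different argument.
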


\begin{proof} It follows from Theorems \ref{Thm TRS 2} and \ref{Lemma TD 6}. As an alternative proof, in the proof of Theorem \ref{Lemma TD 6}, we observed that for any ring $A$ then there exists a faithfully flat extension ring $A_1$ of $A$ such that the induced map $\Pic(A)\rightarrow\Pic(A_{1})$ is zero. Then by Theorem \ref{Thm TRS 2}, we have $\Pic(A)\simeq\mathfrak{C}(A,A_1)$.
\end{proof}

Next, we prove another useful result on invertible ideals:

\begin{theorem}\label{lemma toofan 20} Every invertible ideal of an extension of rings $A\subseteq B$ with $A$ has finitely many maximal ideals is principal. 
\end{theorem}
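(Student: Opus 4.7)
The plan is to reduce this to the classical fact that finitely generated projective modules of constant rank over a semi-local ring are free, and then to invoke Lemma \ref{Lemma pii 20}. Let $L$ be an invertible ideal of the extension $A\subseteq B$. By Lemma \ref{key lemma}, $L$ is a finitely generated projective $A$-module of constant rank $1$. The goal is to show $L=Ax$ for some $x\in B^{\ast}$.

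First I would establish (or recall) the semi-local freeness statement: if $A$ has maximal ideals $\mathfrak{m}_{1},\ldots,\mathfrak{m}_{n}$ with Jacobson radical $J=\mathfrak{m}_{1}\cap\cdots\cap\mathfrak{m}_{n}$, then $A/J\simeq\prod_{i=1}^{n}A/\mathfrak{m}_{i}$ by the Chinese Remainder Theorem, i.e.\ $A/J$ is a finite product of fields. A finitely generated projective module over such a product of fields is determined (up to isomorphism) by its rank on each factor of $\Spec(A/J)$; since $L$ has constant rank $1$, $L/JL$ is free of rank $1$ over $A/J$. Choosing $x\in L$ whose image is a basis of $L/JL$, the morphism $A\to L$ sending $1\mapsto x$ is surjective modulo $J$, hence surjective by Nakayama's lemma (recall that $L$ is finitely generated by Lemma \ref{key lemma}). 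Its kernel is a direct summand of $A$ (because $L$ is projective) whose rank map vanishes identically on $\Spec(A)$, and hence is zero. Thus $L=Ax$ as $A$-modules.

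Now $L=Ax$ is an invertible ideal of $A\subseteq B$, so Lemma \ref{Lemma pii 20} immediately forces $x\in B^{\ast}$, showing that $L$ is a principal invertible ideal of the extension.

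The only delicate point is the semi-local freeness step; everything else is a direct appeal to results already in place. There is no genuine obstacle beyond invoking the standard Nakayama-plus-CRT argument correctly, making sure to use \emph{constant} rank $1$ (which is guaranteed by Lemma \ref{key lemma}) rather than just locally free of some rank. Once freeness is in hand, the identification $L=Ax$ as a submodule of $B$ (and not merely an abstract isomorphism) is automatic, because the generator $x$ lives in $L\subseteq B$.
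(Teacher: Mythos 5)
Your proof is correct, but it takes a genuinely different route from the paper's. You first pass through Lemma \ref{key lemma} to view $L$ as a finitely generated projective $A$-module of constant rank $1$, then prove from scratch (CRT on $A/J$ plus Nakayama) that such a module over a semi-local ring is free, and finally feed the generator back into Lemma \ref{Lemma pii 20}; all the individual steps check out, and there is no circularity since you do not appeal to Corollary \ref{coro toofan 21}. (The kernel/rank argument at the end is superfluous: surjectivity of $A\to L$, $1\mapsto x$, already gives $L=Ax$, which together with Lemma \ref{Lemma pii 20} is all the statement requires.) The paper instead argues entirely inside the ideal arithmetic of the extension: for each maximal ideal $M_k$ it picks $x_k\in L$, $y_k\in L^{-1}$ with $x_ky_k\notin M_k$ and an element $a_k$ lying in every other maximal ideal but not in $M_k$, sets $y=\sum_i a_iy_i\in L^{-1}$, and shows directly that $Ly=A$, whence $L^{-1}=Ay$ and $L=Ay^{-1}$. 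That argument never uses projectivity or Lemma \ref{key lemma} at all and produces the generator explicitly; it also fits the paper's logical architecture, in which the triviality of $\Pic$ for semi-local rings is \emph{deduced} from this theorem rather than used to prove it. Your version buys a cleaner conceptual reduction to a standard fact about semi-local rings, at the cost of more machinery and of inverting that order of deduction.
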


\begin{proof} Take $L\in\mathscr{G}(A,B)$. Let $M_{1},\ldots, M_{n}$ be all (distinct) maximal ideals of $A$. For each $k$, since $M_{k}\neq A=LL^{-1}$, we may choose some $x_{k}\in L$ and $y_{k}\in L^{-1}$ such that $x_{k}y_{k}\notin M_{k}$. Also, for each $k$ there exists some $a_{k}\in\bigcap\limits_{\substack{1\leqslant i\leqslant n,\\i\neq k}}M_{i}$ such that $a_{k}\notin M_{k}$. We have $y:=\sum\limits_{i=1}^{n}a_{i}y_{i}\in L^{-1}$ and $L(Ay)=Ly$ is an ideal of $A$. Then we show  that $Ly=A$. If not, then $Ly\subseteq M_{k}$ for some $k$. It follows that $x_{k}y=(x_{k}y_{k})a_{k}+
\sum\limits_{\substack{1\leqslant i\leqslant n,\\i\neq k}}(x_{k}y_{i})a_{i}\in M_{k}$. But each $x_{k}y_{i}\in LL^{-1}=A$ and so $\sum\limits_{\substack{1\leqslant i\leqslant n,\\i\neq k}}(x_{k}y_{i})a_{i}\in M_{k}$. This yields that $(x_{k}y_{k})a_{k}\in M_{k}$ which is a contradiction. Therefore $Ly=A$. It follows that  $L^{-1}=Ay$, because $\mathscr{G}(A,B)$ is a group. By Lemma \ref{Lemma pii 20}, $y\in B^{\ast}$. Thus $L=Ay^{-1}$ is a principal invertible ideal of the extension $A\subseteq B$.  
\end{proof}

Recall that if $M$ is a module over a ring $A$, then, up to canonical isomorphisms (i.e., no matter how one puts in parentheses, by the associativity of tensor, they are assumed identical), we can define $M^{\otimes0}=A$, $M^{\otimes1}=M$, $M^{\otimes2}=M\otimes_{A}M$ and so on. In fact, $M^{\otimes(n+1)}=M^{\otimes n}\otimes_{A}M$  for all $n\geqslant0$. We also define $M^{\otimes(-n)}:=(\widehat{M})^{\otimes n}$ for all $n\geqslant1$ where $\widehat{M}=\Hom_{A}(M,A)$ is the dual of $M$. Then we have the following result:

\begin{lemma}\label{Lemma 7 nf} Let $L$ be an invertible module over a ring $A$. If  $f:L\rightarrow A$ is a morphism of $A$-modules, then $f(x)y=f(y)x$ for all $x,y\in L$. In addition, $\bigoplus\limits_{n\in\mathbb{Z}}L^{\otimes n}$ is a $\mathbb{Z}$-graded commutative ring. 
\end{lemma}

\begin{proof} We know that there are (finitely) many elements $b_{1},\ldots, b_{n}\in A$ which generate the unit ideal of $A$ and $L_{b}$ is a free $A_{b}$-module of constant rank 1  for all $b\in\{b_{1},\ldots, b_{n}\}$. Thus each $L_{b}=A_{b}(z_{b})$ for some $z_{b}\in L_{b}$. 
Then in $L_{b}$ we have $x/1=rz_{b}$ and $y/1=r'z_{b}$ for some $r,r'\in A_{b}$. We know that the map $f_{b}:L_{b}\rightarrow A_{b}$, induced by $f$, is a morphism of $A_{b}$-modules. Thus 
$f_{b}(x/1)y/1=rr'f_{b}(z_{b})z_{b}=f_{b}(y/1)x/1$.
This shows that the image of $f(x)y-f(y)x$ under the canonical map $L\rightarrow L_{b}$ is zero. Thus there exists a natural number $N\geqslant1$ such that $b^{N}(f(x)y-f(y)x)=0$ for all $b\in\{b_{1},\ldots, b_{n}\}$. But we may write $1=\sum\limits_{k=1}^{n}a_{k}b^{N}_{k}$ where $a_{1},\ldots, a_{n}\in A$. Then $f(x)y-f(y)x=\sum\limits_{k=1}^{n}a_{k}
b^{N}_{k}((f(x)y-f(y)x))=0$. \\
The canonical isomorphism of $A$-modules $\widehat{L}\otimes_{A}L\rightarrow A$ induces a (unique) operation $\ast$ (called multiplication) on the $A$-module $B:=\bigoplus\limits_{n\in\mathbb{Z}}L^{\otimes n}$ such that $f\ast x=f(x)$ for all $f\in\widehat{L}$ and $x\in L$, and makes $B$ into a $\mathbb{Z}$-graded ring with homogeneous components $L^{\otimes n}$. This ring also contains the tensor algebras $\bigoplus\limits_{n\geqslant0}L^{\otimes n}$ and $\bigoplus\limits_{n\geqslant0}L^{\otimes(-n)}$ as subrings. To show that the ring $B$ is 
commutative, it suffices to show that the product of any two homogeneous elements is commutative. First we show that $x\otimes y=y\otimes x$ for all $x,y\in L$. We may write $1=\sum\limits_{i=1}^{n}f_{i}(x_{i})$ where $x_{i}\in L$ and $f_{i}\in \widehat{L}$ for all $i$. Then $x\otimes y=\big(\sum\limits_{i=1}^{n}f_{i}(x_{i})\big)x\otimes y=\big(\sum\limits_{i=1}^{n}f_{i}(x_{i})x\big)\otimes y=\big(\sum\limits_{i=1}^{n}f_{i}(x)x_{i}\big)\otimes y=
\sum\limits_{i=1}^{n}x_{i}\otimes f_{i}(x)y=\sum\limits_{i=1}^{n}x_{i}\otimes f_{i}(y)x=\sum\limits_{i=1}^{n}f_{i}(y)x_{i}\otimes x=\sum\limits_{i=1}^{n}f_{i}(x_{i})y\otimes x=y\otimes x$. \\
This shows that for any $L\in\Pic(A)$ the ring $\bigoplus\limits_{n\geqslant0}L^{\otimes n}$, the tensor algebra of $L$, is commutative. Hence, $\bigoplus\limits_{n\geqslant0}L^{\otimes(-n)}$,
the tensor algebra of $\widehat{L}$, is also commutative.  
We also have $f(x)g=g(x)f$ for all $f,g\in \widehat{L}$ and all $x\in L$. Finally, note that for any finitely many elements $x_{1},\ldots,x_{n}\in L$ and $f_{1},\ldots, f_{m}\in\widehat{L}$ with $m,n\geqslant1$ we have $f\ast(x_{1}\otimes\cdots\otimes x_{n})=f(x_{1})x_{2}\otimes\cdots\otimes x_{n}=x_{1}\otimes f(x_{2})x_{3}\otimes\cdots \otimes x_{n}=x_{1}\otimes x_{2}\otimes f(x_{3})x_{4}\otimes\cdots\otimes x_{n}=\ldots=x_{1}\otimes x_{2}\otimes\cdots\otimes f(x_{n})x_{n-1}=(x_{1}\otimes\cdots\otimes x_{n})\ast f$ and so $(f_{1}\otimes\cdots\otimes f_{m})\ast(x_{1}\otimes\cdots\otimes x_{n})=(x_{1}\otimes\cdots\otimes x_{n})\ast(f_{1}\otimes\cdots\otimes f_{m})$. This completes the proof.  
\end{proof}

If $R=\bigoplus\limits_{n\geqslant0}R_{n}$ is an $\mathbb{N}$-graded ring then the ring extension $R_{0}\subseteq R$ has a retraction, because the map $R\rightarrow R_{0}$ given by $\sum\limits_{n\geqslant0}r_{n}\mapsto r_{0}$ is a ring morphism (note that the $\mathbb{N}$-grading is necessary for this map to be a ring morphism). Then by Corollary \ref{Lemma TH 2}, we have $\Cl(R_{0}, R)=0$. 

In particular, if $L$ is an invertible module over a ring $A$, then its tensor algebra $B:=\bigoplus\limits_{n\geqslant0}L^{\otimes n}$ is an $\mathbb{N}$-graded commutative ring (see Lemma \ref{Lemma 7 nf}). So $\Cl(A, B)=0$. 

We give a new proof to the following well known result:

\begin{corollary}\label{coro toofan 21} The Picard group of every ring with finitely many maximal ideals is trivial. 
\end{corollary}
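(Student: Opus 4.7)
The plan is to combine the three main theorems already proved: Theorem \ref{Thm TRS 2} (the exact sequence relating $\mathfrak{C}(A,B)$, $\Pic(A)$, $\Pic(B)$), Theorem \ref{Lemma TD 6} (existence of a ring extension that trivializes the Picard group), and Theorem \ref{lemma toofan 20} (invertible ideals over a ring with finitely many maximal ideals are principal). The idea is to use Theorem \ref{Lemma TD 6} to turn the vanishing of $\Pic$ into a statement about the ideal class group of an extension, and then kill that ideal class group using Theorem \ref{lemma toofan 20}.

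First I would take a ring $A$ with only finitely many maximal ideals, and invoke Theorem \ref{Lemma TD 6} to produce a ring extension $A \subseteq B$ with $\Pic(B) = 0$. Feeding this extension into Theorem \ref{Thm TRS 2} gives the exact sequence
$$\xymatrix{0 \ar[r] & \mathfrak{C}(A,B) \ar[r] & \Pic(A) \ar[r] & \Pic(B) = 0,}$$
from which I obtain a canonical isomorphism $\mathfrak{C}(A,B) \simeq \Pic(A)$.

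Next I would apply Theorem \ref{lemma toofan 20} directly to the extension $A \subseteq B$: since $A$ has only finitely many maximal ideals, every invertible ideal of this extension is principal, which is precisely the statement that the group $\mathfrak{C}(A,B)$ is trivial. Combining with the isomorphism from the previous step yields $\Pic(A) = 0$, as required.

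I do not anticipate any real obstacle here; all the heavy lifting has already been done in Theorems \ref{Thm TRS 2}, \ref{Lemma TD 6}, and \ref{lemma toofan 20}, and the deduction is essentially a one-line chase through the exact sequence. The only thing one must notice is that the existence of some trivializing extension (rather than a specifically chosen one) is exactly what makes the argument work: the ideal class group depends on $B$, but once $\Pic(B)$ vanishes the isomorphism $\mathfrak{C}(A,B) \simeq \Pic(A)$ is automatic, and Theorem \ref{lemma toofan 20} collapses the left-hand side regardless of which extension was chosen.
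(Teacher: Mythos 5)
Your proof is correct, and it reaches the conclusion by a slightly different route than the paper. The paper does not invoke the full statement of Theorem \ref{Lemma TD 6} at all: given a single $L\in\Pic(A)$, it uses only the observation (made inside the proof of that theorem) that $L$ is an invertible ideal of the extension $A\subseteq A(L)=\bigoplus_{n\in\mathbb{Z}}L^{\otimes n}$, and then applies Theorem \ref{lemma toofan 20} to that one extension to conclude $L=Ab$ with $b\in B^{\ast}$, hence $L\simeq A$. You instead take the global trivializing extension $A\subseteq B$ produced by Theorem \ref{Lemma TD 6} (which requires the tensor-product and direct-limit construction over all line bundles, iterated countably many times), identify $\Pic(A)$ with $\mathfrak{C}(A,B)$ via the exact sequence of Theorem \ref{Thm TRS 2}, and then kill $\mathfrak{C}(A,B)$ with Theorem \ref{lemma toofan 20}. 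Both arguments are valid and there is no circularity, since neither Theorem \ref{Lemma TD 6} nor Theorem \ref{lemma toofan 20} depends on this corollary. What your version buys is a clean one-line deduction from the stated theorems, with no need to peek inside a proof; what the paper's version buys is economy --- it needs only the elementary torsor-algebra construction for a single line bundle and the trivial implication ``$L$ principal $\Rightarrow$ $[L]=[A]$'', rather than the heavy machinery of Theorem \ref{Lemma TD 6} and the exact sequence. Your closing remark is also on point: since Theorem \ref{lemma toofan 20} applies to \emph{every} extension of $A$, the choice of trivializing extension is immaterial.
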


\begin{proof} Let $L$ be an invertible module over a ring $A$. Then consider $B=\bigoplus\limits_{n\in\mathbb{Z}}L^{\otimes n}$ which is a commutative ring extension of $A$ (see Lemma \ref{Lemma 7 nf}). It can be easily seen that in this ring $B$ we have $L\widehat{L}=A$. This shows that $L$ is an invertible ideal of the extension  $A\subseteq B$.  Therefore if $A$ has finitely many maximal ideals then by Theorem \ref{lemma toofan 20}, $L$ is principal, i.e., $L=Ab$ for some $b\in B^{\ast}$. Hence, $L\simeq A$ as $A$-modules.   
\end{proof}

The Picard group does not necessarily commute with the ``infinite" direct product of rings. However, we have the following result:

\begin{theorem}\label{key lemma 2 Pic} Let $(R_i)$ be a family of rings. Then the group $\Pic(\prod\limits_{i}R_{i})$ can be canonically embedded in the group $\prod\limits_{i}\Pic(R_{i})$. 
\end{theorem}

\begin{proof} We show that the natural morphism of groups $\Pic(\prod\limits_{i}R_{i})\rightarrow
\prod\limits_{i}\Pic(R_{i})$ given by $L\mapsto(L\otimes_{R}R_{i})_{i}$ is injective. To see this, it suffices to show that every invertible  $R$-module and more generally every finitely presented $R$-module $L$ is isomorphic to $\prod\limits_{i}L_{i}$ where $L_{i}=L\otimes_{R}R_{i}$ (recall that every finitely generated projective module is finitely presented). In fact, we show that the natural morphism of $R$-modules $\phi:L\rightarrow\prod\limits_{i}L_i$ given by $x\mapsto(x\otimes1)_{i}$ is an isomorphism.
Indeed, first tensoring a finite presentation: $$\xymatrix{R^{n}\ar[r]&R^{m}\ar[r]&L\ar[r]&0}$$ of $L$ with $-\otimes_{R}R_{i}$ to get a finite presentation for $L_i$: 
$$\xymatrix{R_{i}^{n}\ar[r]&R_{i}^{m}\ar[r]&L_{i}
\ar[r]&0.}$$ 
Next taking direct products, the following exact sequence of $R$-modules is obtained: 
$$\xymatrix{(\prod\limits_{i}R_{i}^{n})
\ar[r]&(\prod\limits_{i}R_{i}^{m})\ar[r]&
(\prod\limits_{i}L_{i})\ar[r]&0.}$$
Then by using the fact that for any family of modules $(N_k)$ over a ring $R$, the $R$-modules $(\prod\limits_{k}N_{k})^{\oplus n}$ and $\prod\limits_{k}N_{k}^{\oplus n}$ are canonically isomorphic for all natural numbers $n\geqslant0$, we get the following commutative diagram of $R$-modules with exact rows:  
$$\xymatrix{R^{n}\ar[r]\ar[d]^{\simeq}&
R^{m}\ar[r]\ar[d]^{\simeq}&L\ar[r]\ar[d]^{\phi}&0 \\ (\prod\limits_{i}R_{i}^{n})
\ar[r]&(\prod\limits_{i}R_{i}^{m})\ar[r]&
(\prod\limits_{i}L_{i})\ar[r]&0.}$$
So by the five lemma, the map $\phi$ is an isomorphism of $R$-modules.   
\end{proof}

\begin{corollary}\label{Thm toofan T20 1} Let $(R_i)$ be a family of rings and $R=\prod\limits_{i}R_i$. Then $\Pic(R)=0$ if and only if $\Pic(R_i)=0$ for all $i$. 
\end{corollary}

\begin{proof} First note that for any idempotent $e$ of a commutative ring $R$, then $Re$ is a commutative ring with the identity element $e$, and we have a decomposition of rings $R\simeq Re \times R(1-e)$. Thus $\Pic(R)\simeq\Pic(Re)\times \Pic(R(1-e))$. Each factor ring $R_k$ is isomorphic to $R/(1-e) \simeq Re$ for some idempotent $e=(\delta_{i,k})_{i} \in R$ where $\delta_{i,k}$ is the Kronecker delta. Now if $\Pic(R)=0$, then $\Pic(R_k)\simeq\Pic(Re)=0$ for all $k$. The reverse implication follows from Theorem \ref{key lemma 2 Pic}. 
\end{proof}

\begin{corollary}\label{Coro Pic 1 TDCR} The Picard group of every direct product of local rings is trivial. 
\end{corollary}

\begin{proof} Apply Corollaries \ref{coro toofan 21} and \ref{Thm toofan T20 1}.
\end{proof}

A weak version of Theorem \ref{Lemma TD 6} can be proved in an elementary fashion as follows: 

\begin{corollary}\label{coro nice 3046782 n} Every ring has a ring extension whose Picard group is trivial. 
\end{corollary}

\begin{proof} Every ring $R$ can be canonically embedded in $A:=\prod\limits_{M\in\Max(R)}R_{M}$. By Corollary \ref{Coro Pic 1 TDCR}, we have $\Pic(A)=0$.
\end{proof}

\begin{corollary}\label{Corollary Pic iso Cl} Let $A\subseteq B$ be an extension of rings such that $B$ has finitely many maximal ideals. Then we have the canonical isomorphism of groups $\Cl(A,B)\simeq\Pic(A)$.
\end{corollary}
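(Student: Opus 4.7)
The plan is to combine the exact sequence of Theorem \ref{Thm TRS 2} with the triviality of the Picard group of $B$, which follows from Corollary \ref{coro toofan 21}. Specifically, Theorem \ref{Thm TRS 2} provides the exact sequence
$$\xymatrix{0\ar[r]&\mathfrak{C}(A,B)\ar[r]^-{\phi}&\Pic(A)\ar[r]^-{\psi}&\Pic(B),}$$
where $\phi$ sends (the class of) an invertible ideal $L$ of the extension to its isomorphism class as an $A$-module, and $\psi$ is the base change map $L\mapsto L\otimes_{A}B$.

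First I would invoke Corollary \ref{coro toofan 21}: since $B$ has finitely many maximal ideals, $\Pic(B)=0$. Consequently the map $\psi:\Pic(A)\rightarrow\Pic(B)$ is the zero map, so by exactness $\phi$ is surjective. Combined with the injectivity of $\phi$ (which is already built into the exact sequence of Theorem \ref{Thm TRS 2}), this gives that $\phi$ is an isomorphism of Abelian groups $\Cl(A,B)\simeq\Pic(A)$.

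There is essentially no obstacle beyond recognizing that the corollary is a direct consequence of two results already established. One should only take care to note that the map realizing the isomorphism is canonical: it is the natural map that assigns to an invertible ideal $L$ of the extension $A\subseteq B$ its isomorphism class in $\Pic(A)$, well-defined modulo the subgroup $H=\{Ax:x\in B^{\ast}\}$ of principal invertible ideals. Thus the proof reduces to a one-line appeal: combine Theorem \ref{Thm TRS 2} with Corollary \ref{coro toofan 21}.
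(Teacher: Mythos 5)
Your proposal is correct and is exactly the paper's argument: apply the exact sequence of Theorem \ref{Thm TRS 2} together with Corollary \ref{coro toofan 21}, which gives $\Pic(B)=0$ and hence forces the injection $\mathfrak{C}(A,B)\rightarrow\Pic(A)$ to be surjective as well. Nothing is missing.
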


\begin{proof} Apply Theorem \ref{Thm TRS 2} and Corollary \ref{coro toofan 21}.
\end{proof}

\begin{corollary}\label{Coro rslrpc} If $A$ is a reduced ring with finitely many minimal primes, then we have the canonical isomorphism of groups $\Cl(A)\simeq\Pic(A)$.
\end{corollary}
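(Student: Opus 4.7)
The plan is to invoke the exact sequence of Corollary \ref{Coro 1 new T}, namely
$$\xymatrix{0\ar[r]&\mathfrak{C}(A)\ar[r]&\Pic(A)\ar[r]&\Pic(T(A)),}$$
and then reduce the problem to showing that $\Pic(T(A))=0$. Once this vanishing is established, the injective map $\mathfrak{C}(A)\rightarrow\Pic(A)$ is forced to be surjective, giving the claimed canonical isomorphism.

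To prove $\Pic(T(A))=0$, I would apply Corollary \ref{coro toofan 21}, so it suffices to check that $T(A)$ has only finitely many maximal ideals. Let $\mathfrak{p}_{1},\ldots,\mathfrak{p}_{n}$ be the minimal primes of $A$. Since $A$ is reduced, the set of zero-divisors equals the union of associated primes of the zero ideal, which in the reduced case are exactly the minimal primes; hence $Z(A)=\bigcup_{i=1}^{n}\mathfrak{p}_{i}$. The prime ideals of $T(A)=S^{-1}A$ with $S=A\setminus Z(A)$ correspond bijectively to the primes of $A$ disjoint from $S$, that is, to primes of $A$ contained in $\bigcup_{i=1}^{n}\mathfrak{p}_{i}$. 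By prime avoidance, any such prime sits inside some $\mathfrak{p}_{i}$, and minimality of $\mathfrak{p}_{i}$ then forces equality. Thus $T(A)$ has precisely $n$ prime ideals, which are necessarily all maximal (and in fact $T(A)$ is a finite product of fields $\kappa(\mathfrak{p}_{1})\times\cdots\times\kappa(\mathfrak{p}_{n})$, although we do not need this stronger structure).

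With $T(A)$ having finitely many maximal ideals, Corollary \ref{coro toofan 21} gives $\Pic(T(A))=0$, and substituting into the exact sequence of Corollary \ref{Coro 1 new T} yields the canonical isomorphism $\mathfrak{C}(A)\simeq\Pic(A)$. There is no substantive obstacle here; the only place that genuinely uses the hypotheses is the prime-avoidance step, where both \emph{reducedness} (to identify $Z(A)$ with the union of minimal primes) and \emph{finiteness of the set of minimal primes} (so that prime avoidance applies and the maximal spectrum of $T(A)$ is finite) are essential. The rest is an assembly of previously established results.
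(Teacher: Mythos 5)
Your proof is correct and follows essentially the same route as the paper: identify $Z(A)$ with the union of the finitely many minimal primes using reducedness, apply prime avoidance to conclude that $T(A)$ has finitely many maximal ideals, invoke Corollary \ref{coro toofan 21} to get $\Pic(T(A))=0$, and conclude via the exact sequence of Corollary \ref{Coro 1 new T}. The extra detail you supply (the explicit description of $\Spec(T(A))$ and the product-of-fields remark) is accurate but not needed beyond what the paper records.
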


\begin{proof} Since $A$ is reduced, the set of its zero-divisors is the union of its minimal prims, i.e.,  $Z(A)=\bigcup\limits_{\mathfrak{p}\in\Min(A)}\mathfrak{p}$. Then using the prime avoidance lemma (see e.g. \cite[Theorem 2.1]{Tarizadeh-Chen} or \cite[Theorem 2.2]{Tarizadeh-Chen 2}), we conclude that $T(A)$ has finitely many maximal ideals. Then by Corollary \ref{coro toofan 21}, the Picard group of $T(A)$ is trivial. Then the assertion is deduced from Corollary \ref{Coro 1 new T}.
\end{proof}

As an immediate consequence of Corollary \ref{Coro rslrpc}, for every integral domain $A$, we have the canonical isomorphism of groups $\Cl(A)\simeq\Pic(A)$. In particular, every Abelian group is the Picard group of some integral domain (because every Abelian group is the ideal class group of some Dedekind domain \cite[Theorem 7]{Claborn}). 

\begin{example}\label{Example one} The sequence in Theorem \ref{Thm TRS 2} is not right exact in general. For instance, take an integral domain $R$ whose ideal class group is non-trivial. Thus $\Pic(R)\simeq\Cl(R)\neq0$.
But we have an extension of rings $\mathbb{Z}\subseteq R$ or $\mathbb{Z}/p\subseteq R$ for some prime number $p$. We know that the Picard group of every UFD is trivial, and so $\Pic(\mathbb{Z})=\Pic(\mathbb{Z}/p)=0$.    
\end{example}

The following result and Corollary \ref{Coro JPAA} show that invertible modules have the avoidance property: 

\begin{lemma}\label{Lemma avoidance} Let $A\subseteq B$ be an extension of rings and $L$ be an invertible ideal of this extension. If $L_{1},\cdots, L_{n}$ are finitely many $A$-submodules of $B$ with $L\subseteq\bigcup\limits_{k=1}^{n}L_{k}$, then $L\subseteq L_{k}$ for some $k$.
\end{lemma}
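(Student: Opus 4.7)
I will argue by contradiction: suppose $L\not\subseteq L_k$ for every $k\in\{1,\ldots,n\}$. For each $k$, set $N_k:=L\cap L_k$, which is then a proper $A$-submodule of $L$. The quotient $L/N_k$ is a nonzero finitely generated $A$-module, so its support $V(\Ann(L/N_k))$ is a nonempty closed subset of $\Spec(A)$ and therefore contains a maximal ideal $\mathfrak{m}_k$ of $A$ with $(L/N_k)_{\mathfrak{m}_k}\neq 0$. The strategy is to pull the finitely many offending primes $\mathfrak{m}_1,\ldots,\mathfrak{m}_n$ into a single semilocal ring, so as to bring Theorem \ref{lemma toofan 20} to bear.

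To that end, let $S:=A\setminus\bigcup_{k=1}^{n}\mathfrak{m}_k$, a multiplicative subset of $A$ since each $\mathfrak{m}_k$ is prime. Write $A':=S^{-1}A$, $B':=S^{-1}B$, $L':=S^{-1}L$ and $L_k':=S^{-1}L_k$. Using that localization commutes with products (recorded in the preliminaries), finite intersections, and set-theoretic unions of $A$-submodules of $B$, one checks that $L'\cdot(L^{-1})'=S^{-1}(LL^{-1})=A'$, so $L'$ is an invertible ideal of the extension $A'\subseteq B'$, and that $L'\subseteq\bigcup_{k}L_k'$. Moreover, by the usual prime avoidance in $\Spec(A)$, any prime of $A$ disjoint from $S$ lies in some $\mathfrak{m}_k$, so the maximal ideals of $A'$ are exactly $\mathfrak{m}_1A',\ldots,\mathfrak{m}_nA'$; in particular $A'$ has finitely many maximal ideals.

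By Theorem \ref{lemma toofan 20}, the invertible ideal $L'$ of $A'\subseteq B'$ is principal, so $L'=A'b$ for some $b\in L'$, and by Lemma \ref{Lemma pii 20} we have $b\in(B')^{\ast}$. Since $b\in L'\subseteq\bigcup_{k}L_k'$, there exists an index $k_0$ with $b\in L_{k_0}'$, and then $L'=A'b\subseteq L_{k_0}'$. Consequently $N_{k_0}'=L'\cap L_{k_0}'=L'$, i.e. $S^{-1}(L/N_{k_0})=0$; and because $\mathfrak{m}_{k_0}\cap S=\emptyset$, localizing further at $\mathfrak{m}_{k_0}A'$ yields $(L/N_{k_0})_{\mathfrak{m}_{k_0}}=0$, contradicting the choice of $\mathfrak{m}_{k_0}$. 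The only delicate point is in setting up the localized picture, i.e.\ checking that the operations $\cdot$, $\cap$ and $\cup$ on $A$-submodules of $B$ all behave well under $S^{-1}$ and that $A'$ really is semilocal with the claimed maximal ideals; once these compatibilities are verified, the argument reduces to a single application of Theorem \ref{lemma toofan 20}.
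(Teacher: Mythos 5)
Your proof is correct, but it takes a genuinely different route from the paper. The paper's argument is direct: writing $L=\bigcup_k L'_k$ with $L'_k=L\cap L_k$, it observes that $L'_k=(L^{-1}L'_k)L$ with $L^{-1}L'_k$ an honest ideal of $A$, and then invokes the avoidance lemma for finitely generated \emph{faithful} modules (\cite[Lemma 3.1]{Tarizadeh-Chen}) to conclude that some $L^{-1}L'_k=A$, whence $L=L'_k\subseteq L_k$. You instead pick, for each offending index $k$, a maximal ideal $\mathfrak{m}_k$ in the support of $L/(L\cap L_k)$, localize at $S=A\setminus\bigcup_k\mathfrak{m}_k$ to land in a semilocal base ring, and then apply Theorem \ref{lemma toofan 20} to make $L'$ principal, at which point avoidance is trivial for the cyclic module $A'b$. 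All the compatibility checks you flag do go through: $S^{-1}(LL^{-1})=(S^{-1}L)(S^{-1}L^{-1})$ is recorded in Section 3, the inclusion $S^{-1}L\subseteq\bigcup_k S^{-1}L_k$ only needs one direction (an element $x/s$ with $x\in L$ lies in some $S^{-1}L_k$), exactness of localization handles the intersection, and prime avoidance for the finitely many $\mathfrak{m}_k$ identifies $\Max(S^{-1}A)$; there is also no circularity, since Theorem \ref{lemma toofan 20} does not rely on this lemma. What each approach buys: the paper's proof is shorter and localization-free but outsources the combinatorial core to an external reference, whereas yours is self-contained relative to the paper's own results (Theorem \ref{lemma toofan 20} and Lemma \ref{Lemma pii 20}) at the cost of the localization bookkeeping and the choice of witness maximal ideals.
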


\begin{proof} We may write $L=\bigcup\limits_{k=1}^{n}L'_{k}$ where $L'_{k}=L\cap L_{k}$ for all $k$. Since $LL^{-1}=A$, each $L'_{k}=AL'_{k}=L(L^{-1}L'_{k})$ and $L^{-1}L'_{k}\subseteq L^{-1}L=A$ is an ideal of $A$ (in the usual sense). Then $L=\bigcup\limits_{k=1}^{n}(L^{-1}L'_{k})L$.
We know that $L$ is a finitely generated faithful $A$-module (faithfulness means that $\Ann(L)=0$). It is well known that if $M$ is a finitely generated faithful module over a ring $A$ and $I_{1},\ldots, I_{n}$ are finitely many ideals of $A$ with $M=\bigcup\limits_{k=1}^{n}I_{k}M$, then $I_{k}=A$ for some $k$ (see \cite[Lemma 3.1]{Tarizadeh-Chen}). Therefore, $L^{-1}L'_{k}=A$ and so $L=L'_{k}\subseteq L_{k}$ for some $k$.  
\end{proof} 

\begin{corollary}\label{Coro JPAA} Let $L$ be an invertible module over a ring $A$. If $L_{1},\cdots, L_{n}$ are finitely many $A$-submodules of $L$ with $L=\bigcup\limits_{k=1}^{n}L_{k}$, then $L=L_{k}$ for some $k$. 
\end{corollary}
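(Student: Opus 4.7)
The plan is to reduce this statement to Lemma \ref{Lemma avoidance} by realizing the line bundle $L$ concretely as an invertible ideal of a ring extension of $A$. The natural candidate, which already appeared in the proofs of Theorem \ref{Lemma TD 6} and Corollary \ref{coro toofan 21}, is the torsor algebra $B = A(L) = \bigoplus_{n \in \mathbb{Z}} L^{\otimes n}$.

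First, by Lemma \ref{Lemma 7 nf}, $B$ is a commutative ring, and the canonical inclusion $A = L^{\otimes 0} \hookrightarrow B$ makes $A \subseteq B$ an extension of rings. The degree-one component identifies $L$ with an $A$-submodule of $B$, and the multiplication in $B$ restricted to $L \otimes \widehat{L} \to A$ (which is an isomorphism because $L$ is a line bundle) shows that $L \cdot \widehat{L} = A$ inside $B$. Hence $L$ is an invertible ideal of the extension $A \subseteq B$, with $L^{-1} = \widehat{L}$.

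Next, since each $L_k$ is an $A$-submodule of $L$ and $L$ itself is an $A$-submodule of $B$, each $L_k$ is an $A$-submodule of $B$. The hypothesis $L = \bigcup_{k=1}^{n} L_k$ gives in particular $L \subseteq \bigcup_{k=1}^{n} L_k$, so Lemma \ref{Lemma avoidance} applies and yields an index $k$ with $L \subseteq L_k$. Combined with the reverse inclusion $L_k \subseteq L$ that is given, this forces $L = L_k$, completing the proof.

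There is no real obstacle here: the entire content of the corollary is the embedding trick, i.e. recognizing that an abstract line bundle can always be promoted to a concrete invertible ideal of its own torsor algebra, which reduces the avoidance property for line bundles to the (already established) avoidance property for invertible ideals of an extension. The only point worth underlining is that we are using the torsor algebra purely as an \emph{auxiliary} ring — we do not need its faithful flatness or any direct-limit arguments, only the fact that it makes $L$ invertible in the sense of Section 3.
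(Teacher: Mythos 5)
Your proof is correct, and it takes a genuinely more economical route than the paper's. The paper proves this corollary by invoking the full strength of Theorem \ref{Lemma TD 6} to produce an extension $B$ of $A$ with $\Pic(B)=0$, then uses the exact sequence of Theorem \ref{Thm TRS 2} to find an invertible ideal $L'$ of $A\subseteq B$ with $L\simeq L'$ as $A$-modules, and finally applies Lemma \ref{Lemma avoidance} (implicitly transporting the submodules $L_{k}$ through the isomorphism $L\simeq L'$). You instead use only the single observation, already isolated inside the proof of Theorem \ref{Lemma TD 6}, that $L$ sits as the degree-one component of its own torsor algebra $A(L)=\bigoplus_{n\in\mathbb{Z}}L^{\otimes n}$ and satisfies $L\widehat{L}=A$ there, so $L$ is \emph{literally} an invertible ideal of $A\subseteq A(L)$ and Lemma \ref{Lemma avoidance} applies on the nose. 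This buys you two things: you avoid the direct-limit and faithful-flatness machinery of Theorem \ref{Lemma TD 6} entirely (you only need Lemma \ref{Lemma 7 nf} to know $A(L)$ is a commutative ring), and you avoid the small but real bookkeeping step of transferring the covering $L=\bigcup_{k}L_{k}$ across an abstract isomorphism $L\simeq L'$. The paper's version has the mild advantage of reusing already-stated theorems as black boxes rather than reaching into the interior of a proof, but your argument is self-contained and, if anything, cleaner.
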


\begin{proof} By Theorem \ref{Lemma TD 6} or Corollary \ref{coro nice 3046782 n}, there exists an extension ring $B$ of $A$ such that $\Pic(B)=0$. Then by Theorem \ref{Thm TRS 2}, there exists an invertible ideal $L'$ of the extension $A\subseteq B$ such that $L\simeq L'$ as $A$-modules. Then by Lemma \ref{Lemma avoidance}, $L=L_{k}$ for some $k$. 
\end{proof} 

The following result slightly improves the related result in the literature (that is, $M$ is not necessarily to be projective): 

\begin{lemma}\label{Lemma simproved 20} Let $M$ be a finitely generated flat module and $N$ be a finitely generated projective module over a ring $R$ whose rank maps are the same. Then every surjective morphism of $R$-modules $f: M\rightarrow N$ is an isomorphism. 
\end{lemma}

\begin{proof} Since $N$ is projective, we have a decomposition of $R$-modules $M\simeq N\oplus K$ where $K=\Ker(f)$. Since $M$ is a finitely generated flat $R$-module, $K$ is as well. Then, between the rank maps, we have the relation $\mathrm{r}_{M}=\mathrm{r}_{N}+\mathrm{r}_{K}$. Thus by hypothesis, $\mathrm{r}_{K}=0$. This shows that $K_{\mathfrak{p}}=0$ for all $\mathfrak{p}\in\Spec(R)$. Hence $K=0$. 
\end{proof}

\begin{lemma}\label{Remark 10 incredible 10} Let $e_{1},\ldots,e_{m}$ be idempotents of a ring $R$ such that  $\Spec(R)=\bigcup\limits_{i=1}^{m}D(e_{i})$. Then there are orthogonal idempotents $g_{1},\ldots,g_{d}\in R$ ($d\leqslant 2^{m}-1$) such that for each $k$ there exists some $i$ with $g_{k}=g_{k}e_{i}$ and $\sum\limits_{k=1}^{d}g_{k}=1$. 
\end{lemma}

\begin{proof} For each nonempty subset $S$ of $\{1,\ldots, m\}$, consider the idempotent $g_{S}:=(\prod\limits_{i\in S}e_{i})(\prod\limits_{j\notin S}(1-e_{j}))$. Clearly the idempotents $g_{S}$ are orthogonal and $g_{S}=g_{S}e_{i}$ for $i\in S$. We claim that $\sum\limits_{S}g_{S}=1$ where $S$ ranges in the set of nonempty subsets of $\{1,\ldots, m\}$. Since the idempotents $g_{S}$ are orthogonal, the element $\sum\limits_{S}g_{S}$ is idempotent. Then to prove $\sum\limits_{S}g_{S}=1$, it suffices to show that $D(\sum\limits_{S}g_{S})=D(1)$. For any idempotents $e,e'\in R$ we have $D(e)\cup D(e')=D(e+e'-ee')$. Using this and the orthogonality of the $g_k$, we get that  $D(\sum\limits_{S}g_{S})=\bigcup\limits_{S}D(g_{S})$.
Suppose there is some $\mathfrak{p}\in D(1)=\Spec(R)$ such that $\mathfrak{p}\notin\bigcup\limits_{S}D(g_{S})$. Then for each nonempty subset $S$ of $\{1,\ldots, m\}$ we have $g_{S}\in\mathfrak{p}$. But $e_{k}\notin\mathfrak{p}$ for some $k$.
For $S=\{k\}$ we have $g_{S}=e_{k}(\prod\limits_{j\neq k}(1-e_{j}))\in\mathfrak{p}$. So there exists some index $\ell$ with $\ell\neq k$ such that $1-e_{\ell}\in\mathfrak{p}$. Then take $S=\{k,\ell\}$, we have $g_{S}=e_{k}e_{\ell}(\prod\limits_{j\neq k,\ell}(1-e_{j}))\in\mathfrak{p}$ and so $1-e_{j}\in\mathfrak{p}$ for some $j\neq k,\ell$. Continuing this process, we conclude that for $S=\{1,\ldots, m\}$, we have $g_{S}=\prod\limits_{i=1}^{m}e_{i}\notin\mathfrak{p}$ which is a contradiction. 
\end{proof}

\begin{theorem}\label{Thm zero dim} The Picard group of every zero-dimensional ring is trivial.
\end{theorem}

\begin{proof} Let $R$ be a zero-dimensional ring. By Theorem \ref{Thm 100 npf}, we have a canonical isomorphism of groups $\Pic(R)\simeq\Pic(R_{\mathrm{red}})$. We know that $R_{\mathrm{red}}$ is a von Neumann regular (absolutely flat) ring. Hence, to prove the assertion, it suffices to show that the Picard group of every von Neumann regular ring $R$ is trivial. Let $L$ be an invertible $R$-module. So the canonical morphism of $R$-modules $\widehat{L}\otimes_{R}L\rightarrow R$ is an isomorphism. Then we may write $1=\sum\limits_{i=1}^{m}f_{i}(x_{i})$ where $f_{i}\in\widehat{L}$ and $x_{i}\in L$ for all $i$. Since $R$ is von Neumann, for each $a\in R$ there exists some $b\in R$ such that $a=a^{2}b$. Then $e=ab$ is idempotent and $Ra=Re$. Therefore, for each $i$, there exists an idempotent $e_{i}\in R$ such that $f_{i}(x_{i})R=e_{i}R$ and $\Spec(R)=\bigcup\limits_{i=1}^{m}D(e_{i})$. By Lemma \ref{Remark 10 incredible 10}, there are orthogonal idempotents $g_{1},\ldots,g_{d}\in R$ such that for each $k$ there exists some $i$ for which $g_{k}=g_{k}e_{i}$ and $\sum\limits_{k=1}^{d}g_{k}=1$. Clearly the map $h:g_{k}L\rightarrow g_{k}R$ given by $g_{k}x\mapsto g_{k}f_{i}(x)$ is a morphism of $g_{k}R$-modules (note that $g_{k}R$ is a ring with the identity $g_{k}$). This map is surjective, because $g_{k}r=g_{k}e_{i}r=g_{k}f_{i}(x_{i})r'=
g_{k}f_{i}(r'x_{i})$. Clearly $g_{k}L\simeq L/(1-g_{k})L
\simeq L\otimes_{R}R/(1-g_{k})$ is an invertible module over $g_{k}R\simeq R/(1-g_{k})$. Then by Lemma \ref{Lemma simproved 20}, $h$ is an isomorphism of $g_{k}R$-modules (and so it is an isomorphism of $R$-modules). Since the $g_{k}$ are orthogonal with summing 1, we have the isomorphisms of $R$-modules $L=\sum\limits_{k=1}^{d}g_{k}
L\simeq\bigoplus\limits_{k=1}^{d}g_{k}
L\simeq\bigoplus\limits_{k=1}^{d}g_{k}R\simeq
\sum\limits_{k=1}^{d}g_{k}R=R$. Hence, $\Pic(R)=0$. 
\end{proof}

In particular, Picard groups of finite rings, Artinian rings, and Boolean rings are trivial.

We conclude this section with a problem whose proof is not complete yet. First note that if $L$ is an invertible ideal of a ring extension $A\subseteq B$, then we observed that $L\otimes_{A}B\simeq LB=B$ as $B$-modules. Thus $L$ and hence the subgroup of $\Pic(A)$ generated by $L$ is contained in the kernel of natural group morphism $\Pic(A)\rightarrow\Pic(B)$. 

In particular, if $L$ is an invertible module over a ring $A$ then $L$ is an invertible ideal of the ring extension $A\subseteq B:=\bigoplus\limits_{n\in\mathbb{Z}}L^{\otimes n}$ and so the subgroup of $\Pic(A)$ generated by $L$ is contained in the kernel of natural group morphism $\Pic(A)\rightarrow\Pic(B)$. This inclusion can be strict when $\Spec(A)$ is not connected. So it is natural to ask if this inclusion is equality when $\Spec(A)$ is connected.    

\section{The ideal class group behavior with a tower of extensions}
 
In this section we continue to examine further properties of the ideal class group. In particular, we are interested in knowing how the ideal class group behaves when a tower of ring extensions are involved. In this context, we first come to the following conclusion:

\begin{theorem}\label{Theorem fourth TS} Let $A\subseteq B\subseteq C$ be extensions of rings. Then we have the following exact sequence of Abelian groups:  $$\xymatrix{0\ar[r]&
\mathfrak{C}(A,B)\ar[r]&\mathfrak{C}(A,C)\ar[r]&
\mathfrak{C}(B,C).}$$
\end{theorem}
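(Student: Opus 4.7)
The plan is to produce the two arrows from the evident morphisms in the category of extensions of rings: the arrow $(A,B)\to(A,C)$ whose underlying ring map is the inclusion $B\hookrightarrow C$ sends an $A$-submodule $L\subseteq B$ to itself viewed inside $C$, and the arrow $(A,C)\to(B,C)$ whose underlying ring map is $\Identity_{C}$ sends an $A$-submodule $L\subseteq C$ to $BL$. By the functoriality established earlier in the section, these descend to the corresponding ideal class groups. To see that the composite $\mathfrak{C}(A,B)\to\mathfrak{C}(B,C)$ is zero, note that for $L\in\mathscr{G}(A,B)$ Lemma \ref{key lemma} gives $LB=B$, so the image of $L$ in $\mathscr{G}(B,C)$ is exactly $B$, the identity element.

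For injectivity at $\mathfrak{C}(A,B)$, the key observation is that the inverse of an invertible ideal does not depend on the ambient ring: if $L\subseteq B$ is invertible in $A\subseteq B$ with inverse $L^{-1}\subseteq B$, then $LL^{-1}=A$ continues to hold inside $C$, and inverses in $\mathscr{G}(A,C)$ are unique, so the inverse of $L$ there is again the same $L^{-1}\subseteq B$. Hence if $[L]=0$ in $\mathfrak{C}(A,C)$ then $L=Ax$ for some $x\in C^{\ast}$, and Lemma \ref{Lemma pii 20} gives $L^{-1}=Ax^{-1}$. Since $x\in L\subseteq B$ and $x^{-1}\in L^{-1}\subseteq B$, we conclude $x\in B^{\ast}$, so $[L]$ already vanishes in $\mathfrak{C}(A,B)$.

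The heart of the argument, and the one point where care is needed, is exactness in the middle. Given $L_{0}\in\mathscr{G}(A,C)$ whose image $BL_{0}\in\mathscr{G}(B,C)$ represents the trivial class, Lemma \ref{Lemma pii 20} (applied in the extension $B\subseteq C$) lets us write $BL_{0}=By$ with $y\in C^{\ast}$. Normalise by replacing $L_{0}$ with $L=y^{-1}L_{0}$; since $L_{0}=(Ay)\cdot L$ and $Ay$ is a principal invertible ideal of $A\subseteq C$, we have $[L]=[L_{0}]$ in $\mathfrak{C}(A,C)$, and now $BL=y^{-1}BL_{0}=B$. From $1\in B$ we get $L\subseteq B\cdot L=B$, and multiplying $LL^{-1}=A$ by $B$ together with $BL=B$ yields $B=B\cdot A=B\cdot L\cdot L^{-1}=B\cdot L^{-1}$, so $L^{-1}\subseteq B$ as well. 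Thus $L,L^{-1}\subseteq B$ with $LL^{-1}=A$, exhibiting $L$ as an element of $\mathscr{G}(A,B)$ whose image in $\mathfrak{C}(A,C)$ is $[L]=[L_{0}]$. The only subtle step is the initial twist by $y^{-1}$, which is precisely where the vanishing hypothesis is used to move a representative into $B$; everything else is routine bookkeeping with the two lemmas already established.
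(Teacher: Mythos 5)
Your proof is correct and follows the same skeleton as the paper's: the same two induced maps, injectivity via the observation that $L\subseteq B$ and $L^{-1}\subseteq B$ force the unit $x$ with $L=Ax$ to lie in $B^{\ast}$, and middle exactness by twisting a representative into $B$. The only real difference is in the execution of the middle step: where you normalise directly to $L=y^{-1}L_{0}$ and deduce $L\subseteq B$ and $L^{-1}\subseteq B$ from the module identities $BL=B$ and $B=B(LL^{-1})=BL^{-1}$, the paper constructs the same object as $L_{1}=\{b\in B:bx\in L\}$ together with a candidate inverse $L_{2}$, and verifies $L_{1}L_{2}=A$ by an element-by-element computation (writing $1=\sum c_{k}c'_{k}$ and lifting each factor through $LB=Bx$ and $L^{-1}B=Bx^{-1}$). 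Your version is the cleaner of the two, since it gets both containments for free from the group structure of $\mathscr{G}(A,C)$ and Lemma \ref{Lemma pii 20}, at no loss of rigor.
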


\begin{proof} It is clear that $\mathscr{G}(A,B)$ is a subgroup of $\mathscr{G}(A,C)$.
The map $f:\mathscr{G}(A,C)\rightarrow\mathscr{G}(B,C)$ given by $L\mapsto LB$ is a morphism of groups. If $L\in\mathscr{G}(A,B)$ then $LB=B$. This shows that $\mathscr{G}(A,B)\subseteq\Ker(f)$. If $L\in\Ker(f)$ then $LB=B$ and so $L\subseteq B$. This shows that $L\in\mathscr{G}(A,B)$. Hence, the following sequence is exact: $$\xymatrix{0\ar[r]&
\mathscr{G}(A,B)\ar[r]&\mathscr{G}(A,C)\ar[r]^{f}&
\mathscr{G}(B,C).}$$ Since $B^{\ast}\subseteq C^{\ast}$, we have $H_{1}=\{Ax: x\in B^{\ast}\}\subseteq H_{2}=\{Ax: x\in C^{\ast}\}$. This induces a group map $g:\Cl(A,B)\rightarrow
\Cl(A,C)$ that is given by $LH_{1}\mapsto LH_{2}$. \\ 
If $L\in H_{2}$ for some $L\in\mathscr{G}(A,B)$ then $L=Ax$ for some $x\in C^{\ast}$. But $L\subseteq B$ and
$Ax^{-1}=L^{-1}\subseteq B$.  
It follows that $x\in B^{\ast}$ and so $L\in H_{1}$. Hence $g$ is injective. \\
It is clear that $f(H_{2})\subseteq H_{3}=\{Bx: x\in C^{\ast}\}$. Then we obtain a group map $h:\Cl(A,C)\rightarrow\Cl(B,C)$ that is given by $LH_{2}\mapsto(LB)H_{3}$. Then it is clear that $\Ima(g)\subseteq\Ker(h)$. \\
If $LH_{2}\in\Ker(h)$ with $L\in\mathscr{G}(A,C)$, then $LB=Bx$ for some $x\in C^{\ast}$. But $L_{1}=\{b\in B: bx\in L\}$ and $L_{2}=\{b\in B: bx^{-1}\in L^{-1}\}$ are $A$-submodules of $B$. We also have $L_{1}L_{2}=A$. Because if $b\in L_{1}$ and $b'\in L_{2}$ then $bb'=(bx)(b'x^{-1})\in LL^{-1}=A$. This shows that $L_{1}L_{2}\subseteq A$. \\
To see the reverse inclusion, we may write $1=\sum\limits_{k=1}^{n}c_{k}c'_{k}$ where $c_{k}\in L$ and $c'_{k}\in L^{-1}$ for all $k$. But from $L\subseteq LB=Bx$ we get that each $c_{k}=b_{k}x$ for some $b_{k}\in B$. Then $b_{k}\in L_{1}$ for all $k$. 
We also have $L^{-1}B=Bx^{-1}$. Thus each $c'_{k}=b'_{k}x^{-1}$ for some $b'_{k}\in B$. Then $b'_{k}\in L_{2}$ for all $k$.  It follows that $1=\sum\limits_{k=1}^{n}b_{k}b'_{k}\in L_{1}L_{2}$. This shows that $A\subseteq L_{1}L_{2}$. Hence, $L_{1}$ is an invertible ideal of the extension $A\subseteq B$, i.e., $L_{1}\in\mathscr{G}(A,B)$. We also have $L=L_{1}x$. This shows that $g(L_{1}H_{1})=L_{1}H_{2}=LH_{2}$. Thus $\Ker(h)\subseteq\Ima(g)$. This completes the proof.  
\end{proof}  

\begin{remark} Here we give an alternative proof of Theorem \ref{Theorem fourth TS}. Although the first proof was quite elementary, this proof contains new ideas and also relies on diagram chasing. 
If $\phi:M'\rightarrow M$ and $\psi:M\rightarrow M''$ are morphisms of groups (or morphisms of modules over a fixed ring), then we have the following exact sequence of groups (or modules): $\xymatrix{0\ar[r]&\Ker(\phi)\ar[r]^{j}&\Ker(\psi\phi)
\ar[r]^{h}&\Ker(\psi)}$ where $j$ is the inclusion map and $h$ is defined as $x\mapsto\phi(x)$. \\
We know that any morphisms of rings $f:A\rightarrow B$ and $g:B\rightarrow C$ induce the morphisms of groups $\Pic(f):\Pic(A)\rightarrow\Pic(B)$ and $\Pic(g):\Pic(B)\rightarrow\Pic(C)$. Hence, we obtain the following exact sequence of groups: $$\xymatrix{0\ar[r]&\Ker(\Pic(f))\ar[r]^{j}&
\Ker(\Pic(gf))
\ar[r]^{h}&\Ker(\Pic(g)).}$$
Now, let $A\subseteq B\subseteq C$ be extensions of rings. Then we have the following diagram of Abelian groups: $$\xymatrix{0\ar[r]&
\mathfrak{C}(A,B)\ar[r]\ar[d]^{\simeq}&
\mathfrak{C}(A,C)\ar[r]
\ar[d]^{\simeq}&\mathfrak{C}(B,C)\ar[d]^{\simeq} \\ 0\ar[r]&\Ker(\Pic(j_{1}))\ar[r]&
\Ker(\Pic(j_{2}j_{1}))\ar[r]&\Ker(\Pic(j_{2}))}$$ where $j_{1}:A\rightarrow B$ and $j_{2}:B\rightarrow C$ are the inclusion maps. In the above diagram, the bottom row is exact (see the above discussion). By Theorem \ref{Thm TRS 2}, the vertical maps are isomorphisms. By Corollary \ref{Remark two}, the above diagram is also commutative. Hence, the top row is exact. 
\end{remark}

We say that a morphism of rings $f:A\rightarrow B$
preserves non-zero-divisors if $f(A\setminus Z(A))\subseteq B\setminus Z(B)$. For example, every extension of integral domains preserves non-zero-divisors. As another example, for any ring $A$, the canonical ring map $A\rightarrow A_{\mathrm{red}}$ preserves non-zero-divisors.

Let $A\subseteq B$ be an extension of rings. Then the extensions of rings $A\subseteq B\subseteq T(B)$ gives us a canonical group morphism $\Cl(A,B)\rightarrow\Cl(B)$.
In general, since the total ring of fractions is not a functorial construction, there is no canonical group morphism from $\Cl(A)$ to $\Cl(B)$. Also, there is no canonical group map from $\Cl(A,B)$ to $\Cl(A)$ (or vice versa). But under some conditions we have the following result: 

\begin{theorem}\label{Coro ilginch uch} If $A$ is a reduced ring with finitely many minimal primes and $A\subseteq B$ is an extension of rings which preserves non-zero-divisors, then we have the following exact sequence of Abelian groups: $$\xymatrix{0\ar[r]&
\mathfrak{C}(A,B)\ar[r]&\mathfrak{C}(A)\ar[r]&
\mathfrak{C}(B).}$$
\end{theorem}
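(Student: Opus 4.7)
The plan is to reduce the statement to two applications of Theorem \ref{Theorem fourth TS} (exact sequence for a tower) together with the vanishing result from Theorem \ref{lemma toofan 20} applied to $T(A)$.

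The first step is to promote the extension $A\subseteq B$ to an extension of total rings of fractions. Since $A\subseteq B$ preserves non-zero-divisors, every non-zero-divisor of $A$ becomes a unit in $T(B)$, so the inclusion $A\hookrightarrow B\hookrightarrow T(B)$ factors through a canonical ring map $T(A)\rightarrow T(B)$. This map is injective: if $a/s\in T(A)$ maps to $0$ in $T(B)$, then $a=0$ in $B$ and hence in $A$. Thus we actually obtain a chain of extensions $A\subseteq T(A)\subseteq T(B)$ as well as $A\subseteq B\subseteq T(B)$.

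Next I apply Theorem \ref{Theorem fourth TS} to the tower $A\subseteq B\subseteq T(B)$. This yields the exact sequence
$$\xymatrix{0\ar[r]&\mathfrak{C}(A,B)\ar[r]&\mathfrak{C}(A,T(B))\ar[r]&\mathfrak{C}(B,T(B))=\mathfrak{C}(B).}$$
Applying Theorem \ref{Theorem fourth TS} again to the tower $A\subseteq T(A)\subseteq T(B)$ yields
$$\xymatrix{0\ar[r]&\mathfrak{C}(A)\ar[r]&\mathfrak{C}(A,T(B))\ar[r]&\mathfrak{C}(T(A),T(B)).}$$
So it suffices to show that the rightmost group here vanishes, for then the canonical map $\mathfrak{C}(A)\rightarrow\mathfrak{C}(A,T(B))$ is an isomorphism and we can splice the two sequences together to obtain the desired sequence $0\rightarrow\mathfrak{C}(A,B)\rightarrow\mathfrak{C}(A)\rightarrow\mathfrak{C}(B)$.

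The main obstacle, and the only place where the hypotheses on $A$ are used, is the vanishing $\mathfrak{C}(T(A),T(B))=0$. For this I invoke Theorem \ref{lemma toofan 20}, which requires that $T(A)$ have only finitely many maximal ideals. This is exactly the content of the prime avoidance argument used in the proof of Corollary \ref{Coro rslrpc}: since $A$ is reduced with finitely many minimal primes, $Z(A)$ is the finite union $\bigcup_{\mathfrak{p}\in\Min(A)}\mathfrak{p}$, and prime avoidance then forces every prime of $A$ contained in $Z(A)$ to lie in some $\mathfrak{p}\in\Min(A)$, so the maximal ideals of $T(A)$ are in bijection with $\Min(A)$ and hence finite in number. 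Theorem \ref{lemma toofan 20} then gives $\mathfrak{C}(T(A),T(B))=0$, and the proof is complete.
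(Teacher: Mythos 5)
Your proof is correct, but it takes a genuinely different route from the paper's. The paper constructs the canonical map $g:\mathfrak{C}(A)\rightarrow\mathfrak{C}(B)$ from the morphism of extensions $(A,T(A))\rightarrow(B,T(B))$ and then compares, via a commutative diagram and the five lemma, with the Picard-group sequence $0\rightarrow\mathfrak{C}(A,B)\rightarrow\Pic(A)\rightarrow\Pic(B)$ of Theorem \ref{Thm TRS 2}, using Corollary \ref{Coro rslrpc} to identify $\mathfrak{C}(A)\rightarrow\Pic(A)$ as an isomorphism. You instead stay entirely inside the ideal-class-group formalism: two applications of the tower sequence of Theorem \ref{Theorem fourth TS}, to $A\subseteq B\subseteq T(B)$ and to $A\subseteq T(A)\subseteq T(B)$, reduce everything to the vanishing $\mathfrak{C}(T(A),T(B))=0$, which follows from Theorem \ref{lemma toofan 20} once $T(A)$ is known to be semilocal. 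Both arguments ultimately exploit the hypotheses in the same place (reduced plus finitely many minimal primes gives, by prime avoidance, that $T(A)$ has finitely many maximal ideals, which feeds Theorem \ref{lemma toofan 20}), so neither is logically lighter; but your version avoids the Picard group and the five lemma altogether and makes the mechanism more transparent, at the cost of a small amount of bookkeeping you should make explicit: that the identification $\mathfrak{C}(A)\simeq\mathfrak{C}(A,T(B))$ is compatible with the maps to $\mathfrak{C}(B)=\mathfrak{C}(B,T(B))$ (both are induced by $L\mapsto LB$), so that splicing the two sequences really produces the canonical maps. That check is routine, and with it your argument is complete.
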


\begin{proof} Since the extension $A\subseteq B$ preserves non-zero-divisors, we obtain an (injective) ring map $f:T(A)\rightarrow T(B)$ that is given by $a/s\mapsto a/s$. It is clear that $f(A)\subseteq B$. Thus $f$ is a morphism in the category of extensions of rings. Hence, we obtain a canonical morphism of groups $g:\mathfrak{C}(A)\rightarrow\mathfrak{C}(B)$. Then by Corollary \ref{Remark two}, the following diagram is commutative: $$\xymatrix{
\mathfrak{C}(A)\ar[r]^{g}\ar[d]&\mathfrak{C}(B)\ar[d] \\ \Pic(A)\ar[r]&\Pic(B).}$$ Then, using this and Theorem \ref{Thm TRS 2}, we obtain a group morphism $\Ker(g)\rightarrow\mathfrak{C}(A,B)$ and so the following diagram is commutative: $$\xymatrix{0\ar[r]&
\Ker(g)\ar[r]\ar[d]&\mathfrak{C}(A)\ar[r]^{g}
\ar[d]^{\simeq}&\mathfrak{C}(B)\ar[d] \\ 0\ar[r]&\mathfrak{C}(A, B)\ar[r]&
\Pic(A)\ar[r]&\Pic(B)}$$
where the rows are exact (for the exactness of the bottom row see Theorem \ref{Thm TRS 2}), the third vertical map $\mathfrak{C}(B)\rightarrow\Pic(B)$ is injective,
and by Corollary \ref{Coro rslrpc}, the second vertical map $\mathfrak{C}(A)\rightarrow\Pic(A)$ is an isomorphism. Then by the five lemma, the first vertical map $\Ker(g)\rightarrow\mathfrak{C}(A,B)$ is an isomorphism. This completes the proof.  
\end{proof}

\begin{corollary} If $A\subseteq B$ is an extension of integral domains, then we have the following exact sequence of Abelian groups: $$\xymatrix{0\ar[r]&
\mathfrak{C}(A,B)\ar[r]&\mathfrak{C}(A)\ar[r]&
\mathfrak{C}(B).}$$ 
\end{corollary}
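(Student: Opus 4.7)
The plan is to deduce this corollary directly from Theorem \ref{Coro ilginch uch} by checking that its two hypotheses are satisfied in the setting of an extension of integral domains. Recall that the theorem requires (a) the base ring $A$ to be reduced with finitely many minimal primes, and (b) the extension $A\subseteq B$ to preserve non-zero-divisors. Neither verification should present a genuine obstacle; the work is essentially bookkeeping.

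First I would check hypothesis (a). Since $A$ is an integral domain, it has no nilpotent elements (any nilpotent would be a non-trivial zero-divisor, contradicting the definition), so $A_{\mathrm{red}}=A$ and in particular $A$ is reduced. Moreover, the zero ideal $(0)$ is prime in $A$, and since every prime ideal contains $(0)$, the set $\Min(A)=\{(0)\}$ is a singleton. Thus $A$ has finitely (in fact exactly one) many minimal primes.

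Next I would verify hypothesis (b), which is essentially the remark already made in the paragraph preceding Theorem \ref{Coro ilginch uch}. Let $a\in A\setminus Z(A)$. In the integral domain $A$ this just means $a\neq 0$. Viewing $a$ inside $B$ through the inclusion, $a$ is still nonzero (the inclusion is injective) and, since $B$ is an integral domain, $a\in B\setminus Z(B)$. Hence the inclusion map $A\hookrightarrow B$ sends $A\setminus Z(A)$ into $B\setminus Z(B)$, i.e.\ it preserves non-zero-divisors.

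With both hypotheses verified, Theorem \ref{Coro ilginch uch} delivers the desired exact sequence
$$\xymatrix{0\ar[r]&\mathfrak{C}(A,B)\ar[r]&\mathfrak{C}(A)\ar[r]&\mathfrak{C}(B)}$$
with no further argument required. Since the corollary is a pure specialisation of the main theorem of this section, I do not expect any difficulty; the only ``trap'' to avoid would be forgetting to remark that the extension of integral domains automatically preserves non-zero-divisors, which is exactly where the integrality of $B$ (as opposed to just $A$) is being used.
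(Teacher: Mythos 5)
Your proposal is correct and follows exactly the same route as the paper, which simply cites Theorem \ref{Coro ilginch uch}; your explicit verification that an integral domain is reduced with $\Min(A)=\{(0)\}$ and that an extension of domains preserves non-zero-divisors (a fact the paper already notes just before that theorem) is sound bookkeeping.
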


\begin{proof} It follows from Theorem \ref{Coro ilginch uch}.
\end{proof}

The next main result of this article reads as follows:

\begin{theorem}\label{theorem TH} For any extension of rings $A\subseteq B$, the canonical morphism of groups $\Cl(A,B)\rightarrow\Cl(A_{\mathrm{red}},
B_{\mathrm{red}})$ is an isomorphism. 
\end{theorem}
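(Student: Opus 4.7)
The plan is to reduce Theorem \ref{theorem TH} to the fundamental fact, recalled at the end of Section 2, that the canonical ring map $R\to R_{\mathrm{red}}$ induces an isomorphism $\Pic(R)\simeq\Pic(R_{\mathrm{red}})$ for every commutative ring $R$. The argument will then be a clean four-lemma diagram chase.

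First I would verify that the statement makes sense: since an element $a\in A$ which is nilpotent in $B$ is automatically nilpotent in $A$, one has $\mathrm{nil}(A)=A\cap\mathrm{nil}(B)$, so the induced ring map $A_{\mathrm{red}}\to B_{\mathrm{red}}$ is injective. Hence the canonical surjection $\pi:B\twoheadrightarrow B_{\mathrm{red}}$ constitutes a morphism $\pi:(A,B)\to(A_{\mathrm{red}},B_{\mathrm{red}})$ in the category of extensions of rings, yielding the canonical group morphism $f:\mathfrak{C}(A,B)\to\mathfrak{C}(A_{\mathrm{red}},B_{\mathrm{red}})$ of the statement.

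Next I would assemble the commutative diagram
$$\xymatrix{0\ar[r]&\mathfrak{C}(A,B)\ar[r]\ar[d]^{f}&\Pic(A)\ar[r]\ar[d]^{\alpha}&\Pic(B)\ar[d]^{\beta} \\ 0\ar[r]&\mathfrak{C}(A_{\mathrm{red}},B_{\mathrm{red}})\ar[r]&\Pic(A_{\mathrm{red}})\ar[r]&\Pic(B_{\mathrm{red}})}$$
with exact rows, where exactness is Theorem \ref{Thm TRS 2}, commutativity of the left square comes from Corollary \ref{Remark two} applied to $\pi$, and commutativity of the right square is the functoriality of $\Pic$. By the preliminary theorem on Picard groups modulo the nil-radical, both $\alpha$ and $\beta$ are isomorphisms.

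Finally I would run a short diagram chase. Injectivity of $f$ is immediate, since $\alpha$ composed with the horizontal embedding $\mathfrak{C}(A,B)\hookrightarrow\Pic(A)$ is injective. For surjectivity, given $y\in\mathfrak{C}(A_{\mathrm{red}},B_{\mathrm{red}})$, I lift its image in $\Pic(A_{\mathrm{red}})$ through $\alpha$ to a unique class $z\in\Pic(A)$; commutativity of the right square together with the injectivity of $\beta$ force the image of $z$ in $\Pic(B)$ to vanish, so by Theorem \ref{Thm TRS 2} there exists $x\in\mathfrak{C}(A,B)$ whose image in $\Pic(A)$ is $z$. Then $f(x)$ and $y$ have the same image in $\Pic(A_{\mathrm{red}})$, and the injectivity of the bottom horizontal embedding forces $f(x)=y$. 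There is no real obstacle here; the whole proof is a diagram chase whose substantive input is the nil-radical isomorphism for $\Pic$, and the only point that requires a moment's care is confirming that $A_{\mathrm{red}}\to B_{\mathrm{red}}$ is indeed injective, which we already handled at the outset.
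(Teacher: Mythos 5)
Your proof is correct and follows essentially the same route as the paper: the same commutative diagram with exact rows from Theorem \ref{Thm TRS 2}, commutativity of the left square via Corollary \ref{Remark two}, the nil-radical isomorphism for $\Pic$ on the two right vertical maps, and a five-lemma chase to conclude. The only (harmless) difference is that you spell out the chase and the injectivity of $A_{\mathrm{red}}\to B_{\mathrm{red}}$ explicitly, where the paper simply invokes the five lemma.
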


\begin{proof} The ring extension $A\subseteq B$ induces the ring extension $A_{\mathrm{red}}\subseteq B_{\mathrm{red}}$, and the canonical ring map $f:B\rightarrow B_{\mathrm{red}}$ maps $A$ onto $A_{\mathrm{red}}$. This shows that $f$ is a morphism in the category of extensions of rings and so we obtain a morphism of groups $\Cl(A,B)\rightarrow\Cl(A_{\mathrm{red}}, B_{\mathrm{red}})$. Then consider the following diagram: $$\xymatrix{0\ar[r]&
\mathfrak{C}(A,B)\ar[r]\ar[d]&\Pic(A)\ar[r]\ar[d]^{\simeq}&
\Pic(B)\ar[d]^{\simeq} \\ 0\ar[r]&\mathfrak{C}(A_{\mathrm{red}},
B_{\mathrm{red}})\ar[r]&\Pic(A_{\mathrm{red}})
\ar[r]&\Pic(B_{\mathrm{red}})}$$
where the rows are exact (see Theorem \ref{Thm TRS 2}) and the second and third vertical maps are the canonical isomorphisms (see Theorem \ref{Thm 100 npf}). The above diagram is also commutative, since the commutativity of the left-hand side follows from Corollary \ref{Remark two}, and the right-hand side is clear. Then by the five lemma, the first vertical map is an isomorphism.  
\end{proof} 

For a given ring $A$, if $s$ is a non-zero-divisor of $A$ then $s+\mathfrak{N}$ is a non-zero-divisor of $A_{\mathrm{red}}$ where $\mathfrak{N}$ is the nil-radical of $A$. Hence, we obtain a ring map $\phi:T(A)\rightarrow T(A_{\mathrm{red}})$ which is given by $a/s\mapsto(a+\mathfrak{N})/(s+\mathfrak{N})$. It is clear that $\phi(A)=A_{\mathrm{red}}$. This shows that $\phi$ is a morphism in the category of extensions of rings. Hence, we obtain a morphism of groups $\Cl(A)\rightarrow\Cl(A_{\mathrm{red}})$. By Corollary \ref{Remark two}, the following diagram is commutative: $$\xymatrix{0\ar[r]&
\mathfrak{C}(A)\ar[r]\ar[d]&\Pic(A)\ar[d]^{\simeq}&
\\ 0\ar[r]&\mathfrak{C}(A_{\mathrm{red}})\ar[r]&
\Pic(A_{\mathrm{red}})}$$ 
where the rows are exact and the second vertical map is an isomorphism (see Theorem \ref{Thm 100 npf}). Hence,
the canonical map $\Cl(A)\rightarrow\Cl(A_{\mathrm{red}})$ is injective.
However, unlike the Picard group, the canonical injection $\Cl(A)\rightarrow\Cl(A_{\mathrm{red}})$ is not necessarily an isomorphism in general (see Example \ref{Example TDH}). 

\begin{corollary}\label{Coro ilginch} For any ring $A$, we have the following
exact sequence of Abelian groups:
$$\xymatrix{0\ar[r]&
\mathfrak{C}(A)\ar[r]&\mathfrak{C}(A_{\mathrm{red}})\ar[r]&
\mathfrak{C}(T(A)_{\mathrm{red}}, T(A_{\mathrm{red}})).}$$  
\end{corollary}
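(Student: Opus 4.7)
The plan is to realize the desired sequence as an instance of Theorem \ref{Theorem fourth TS} applied to the tower of extensions $A_{\mathrm{red}} \subseteq T(A)_{\mathrm{red}} \subseteq T(A_{\mathrm{red}})$, with the leftmost term then identified with $\mathfrak{C}(A)$ via Theorem \ref{theorem TH}. So the first preliminary task is to construct this tower.

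The inclusion $A_{\mathrm{red}} \hookrightarrow T(A)_{\mathrm{red}}$ follows from the easy identity $\mathfrak{N}(A) = A \cap \mathfrak{N}(T(A))$. The more delicate step is to show that the canonical map $\phi: T(A) \to T(A_{\mathrm{red}})$, $a/s \mapsto \bar{a}/\bar{s}$, descends to an injection $T(A)_{\mathrm{red}} \hookrightarrow T(A_{\mathrm{red}})$, i.e., that $\ker\phi = \mathfrak{N}(T(A))$. Since $T(A_{\mathrm{red}})$ is reduced, one containment is automatic. For the other, I would use the following trick: if $\phi(a/s) = 0$, pick a non-zero-divisor $\bar{u}$ of $A_{\mathrm{red}}$ with $\bar{u}\bar{a} = 0$; then $ua \in \mathfrak{N}$, so $u^n a^n = 0$ in $A$ for some $n$, and reducing modulo $\mathfrak{N}$ and cancelling the non-zero-divisor $\bar{u}^n$ of $A_{\mathrm{red}}$ forces $\bar{a}^n = 0$, so a further power of $a$ vanishes in $A$ and $a/s$ is nilpotent in $T(A)$.

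With the tower in place, Theorem \ref{Theorem fourth TS} delivers the exact sequence
$$\xymatrix{0\ar[r]&\mathfrak{C}(A_{\mathrm{red}}, T(A)_{\mathrm{red}})\ar[r]&\mathfrak{C}(A_{\mathrm{red}}, T(A_{\mathrm{red}}))\ar[r]&\mathfrak{C}(T(A)_{\mathrm{red}}, T(A_{\mathrm{red}})).}$$
The middle term is $\mathfrak{C}(A_{\mathrm{red}})$ by definition, and applying Theorem \ref{theorem TH} to the extension $A \subseteq T(A)$ yields the canonical isomorphism $\mathfrak{C}(A) = \mathfrak{C}(A, T(A)) \simeq \mathfrak{C}(A_{\mathrm{red}}, T(A)_{\mathrm{red}})$ identifying the leftmost term with $\mathfrak{C}(A)$. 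These substitutions produce the stated sequence; a compatibility check via Corollary \ref{Remark two}, using the factorization $\phi: T(A) \twoheadrightarrow T(A)_{\mathrm{red}} \hookrightarrow T(A_{\mathrm{red}})$, confirms that the induced arrow $\mathfrak{C}(A) \to \mathfrak{C}(A_{\mathrm{red}})$ agrees with the canonical map singled out in the paragraph preceding the corollary.

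The main obstacle is the injectivity $T(A)_{\mathrm{red}} \hookrightarrow T(A_{\mathrm{red}})$: a non-zero-divisor of $A_{\mathrm{red}}$ need not lift to a non-zero-divisor of $A$, so a naive clearing-of-denominators strategy in $T(A)$ does not work, and one has to pass to powers and reduce modulo the nil-radical as above. Once this is in hand, the rest is bookkeeping across the two diagrams provided by Theorems \ref{Theorem fourth TS} and \ref{theorem TH}.
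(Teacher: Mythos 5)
Your proposal is correct and follows essentially the same route as the paper: apply Theorem \ref{Theorem fourth TS} to the tower $A_{\mathrm{red}}\subseteq T(A)_{\mathrm{red}}\subseteq T(A_{\mathrm{red}})$ and identify the first term with $\mathfrak{C}(A)$ via Theorem \ref{theorem TH} with $B=T(A)$. The only difference is that you spell out the verification that $\ker\bigl(T(A)\rightarrow T(A_{\mathrm{red}})\bigr)$ is the nil-radical of $T(A)$, which the paper asserts without proof; your argument for it is valid (though one can shortcut it: $\bar{a}/\bar{s}=0$ in $T(A_{\mathrm{red}})$ already forces $\bar{a}=0$ in $A_{\mathrm{red}}$, since $A_{\mathrm{red}}\rightarrow T(A_{\mathrm{red}})$ is injective).
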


\begin{proof} The kernel of the canonical ring map $T(A)\rightarrow T(A_{\mathrm{red}})$ is the nil-radical of $T(A)$. Hence, we obtain extensions of (reduced) rings $A_{\mathrm{red}}\subseteq T(A)_{\mathrm{red}}\subseteq T(A_{\mathrm{red}})$.
Then by applying Theorem \ref{Theorem fourth TS}, we obtain the following exact sequence of Abelian groups: $$\xymatrix{0\ar[r]&
\mathfrak{C}(A_{\mathrm{red}},
T(A)_{\mathrm{red}})\ar[r]&\mathfrak{C}(A_{\mathrm{red}})
\ar[r]&\mathfrak{C}(T(A)_{\mathrm{red}}, T(A_{\mathrm{red}})).}$$
Next in Theorem \ref{theorem TH}, by taking $B=T(A)$ we obtain the canonical isomorphism of groups $\Cl(A)\simeq\Cl(A_{\mathrm{red}},
T(A)_{\mathrm{red}})$.
Thus we obtain the following exact sequence of Abelian groups: $$\xymatrix{0\ar[r]&
\mathfrak{C}(A)\ar[r]&\mathfrak{C}(A_{\mathrm{red}})\ar[r]&
\mathfrak{C}(T(A)_{\mathrm{red}}, T(A_{\mathrm{red}})).}$$
This completes the proof.
\end{proof}

\begin{example}\label{Example TDH} We give an example of a ring $A$ such that the groups $\Cl(A)$ and $\Cl(A_{\mathrm{red}})$ are not isomorphic. In particular, the canonical injection $\mathfrak{C}(A)\rightarrow\mathfrak{C}(A_{\mathrm{red}})$ is not surjective. \\
First recall that if $M$ is a module over a ring $R$, then the zero-divisors of the idealization ring $R\oplus M$ are precisely of the form $(a,x)$ with $a\in Z(R)\cup Z(M)$ and $x\in M$ where $Z(M)=\{r\in R: \exists 0\neq m\in M, rm=0\}$ is the set of zero-divisors of $M$. Indeed, if $a\in Z(R)$ then $ab=0$ for some $0\neq b\in R$. If $b\in \Ann_{R}(M)$ then $(a,x)\cdot(b,0)=0$. But if $b\notin\Ann_{R}(M)$ then $by\neq0$ for some $y\in M$. Thus $(a,x)\cdot(0,by)=0$. Then we always have  $(a,x)\in Z(R\oplus M)$. \\
So we have the canonical isomorphism of rings $T(R\oplus M)\simeq S^{-1}R\oplus S^{-1}M$ where $S=R\setminus\big(Z(R)\cup Z(M)\big)$. Also note that, as an ideal of $R\oplus M$, we have $M^{2}=0$. \\
Considering these, we are now ready to give the desired example: Let $R$ be an integral domain whose Picard group is non-trivial and consider the $R$-module $M=\bigoplus\limits_{\mathfrak{m}\in\Max(R)}R/\mathfrak{m}$
where the direct sum is taken over the set of all  maximal ideals of $R$. Now consider the idealization $A=R\oplus M$. We have $Z(R)=\{0\}$ and $Z(M)=\bigcup\limits_{\mathfrak{m}\in\Max(R)}\mathfrak{m}$.  It follows that $S=R\setminus\big(Z(R)\cup Z(M)\big)=R^{\ast}$ is the group of units of $R$. Hence, $T(A)=A$. Thus $\mathfrak{C}(A)=0$. But $A_{\mathrm{red}}=R$ and so $\mathfrak{C}(A_{\mathrm{red}})=\mathfrak{C}(R)=
\Pic(R)\neq0$. In particular, this example shows that the ideal class group in some cases behaves differently than the Picard group. 
\end{example}

The union of the minimal primes of a ring always lies in the set of its zero-divisors. In the case of equality we have the following result:   

\begin{corollary}\label{Coro ilginch 2} Let $A$ be a ring. If  $Z(A)=\bigcup\limits_{\mathfrak{p}\in\Min(A)}\mathfrak{p}$ then the canonical morphism of groups $\Cl(A)\rightarrow\Cl(A_{\mathrm{red}})$ is an isomorphism.
\end{corollary}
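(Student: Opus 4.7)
The plan is to invoke the exact sequence of Corollary \ref{Coro ilginch} and show that under the hypothesis the last term vanishes, which forces the canonical injection $\Cl(A)\to\Cl(A_{\mathrm{red}})$ to be surjective as well. Concretely, Corollary \ref{Coro ilginch} gives the exact sequence
$$\xymatrix{0\ar[r]&\mathfrak{C}(A)\ar[r]&\mathfrak{C}(A_{\mathrm{red}})\ar[r]&\mathfrak{C}(T(A)_{\mathrm{red}},T(A_{\mathrm{red}})),}$$
so it suffices to prove $T(A)_{\mathrm{red}}=T(A_{\mathrm{red}})$, because then the extension in the last term is trivial and its ideal class group is $0$ (the only invertible ideal of the extension $R\subseteq R$ is $R$ itself).

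Let $\mathfrak{N}$ denote the nil-radical of $A$ and set $S=A\setminus Z(A)$, so that $T(A)=S^{-1}A$. The first step is to identify the non-zero-divisors of $A_{\mathrm{red}}$. Since $A_{\mathrm{red}}$ is reduced and its minimal primes correspond bijectively to those of $A$ via $\mathfrak{p}\mapsto\mathfrak{p}/\mathfrak{N}$, the zero-divisors of $A_{\mathrm{red}}$ form the union $\bigcup_{\mathfrak{p}\in\Min(A)}\mathfrak{p}/\mathfrak{N}$. The hypothesis $Z(A)=\bigcup_{\mathfrak{p}\in\Min(A)}\mathfrak{p}$ then implies: an element $a+\mathfrak{N}$ is a non-zero-divisor of $A_{\mathrm{red}}$ if and only if $a\notin\mathfrak{p}$ for every $\mathfrak{p}\in\Min(A)$, i.e.\ if and only if $a\in S$. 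Thus the image $\bar{S}$ of $S$ in $A_{\mathrm{red}}$ equals $A_{\mathrm{red}}\setminus Z(A_{\mathrm{red}})$, and therefore $T(A_{\mathrm{red}})=\bar{S}^{-1}A_{\mathrm{red}}$.

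For the other side, localization commutes with taking the nil-radical: the nil-radical of $S^{-1}A$ is $S^{-1}\mathfrak{N}$, so
$$T(A)_{\mathrm{red}}=(S^{-1}A)/(S^{-1}\mathfrak{N})\simeq S^{-1}(A/\mathfrak{N})=\bar{S}^{-1}A_{\mathrm{red}}.$$
Combining the two computations yields a canonical isomorphism $T(A)_{\mathrm{red}}\simeq T(A_{\mathrm{red}})$; moreover this isomorphism is compatible with the canonical maps from $A_{\mathrm{red}}$, so the extension $A_{\mathrm{red}}\subseteq T(A)_{\mathrm{red}}$ identifies with $A_{\mathrm{red}}\subseteq T(A_{\mathrm{red}})$ and the extension $T(A)_{\mathrm{red}}\subseteq T(A_{\mathrm{red}})$ becomes the trivial extension of a ring with itself. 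Hence $\mathfrak{C}(T(A)_{\mathrm{red}},T(A_{\mathrm{red}}))=0$, and the exact sequence above forces $\Cl(A)\to\Cl(A_{\mathrm{red}})$ to be an isomorphism.

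I do not expect a serious obstacle here: the whole argument reduces to the elementary observation that, under the stated hypothesis, non-zero-divisors of $A$ and of $A_{\mathrm{red}}$ correspond, so that the two natural constructions $T(-)$ and $(-)_{\mathrm{red}}$ commute on $A$. The only mild subtlety is making sure that the isomorphism $T(A)_{\mathrm{red}}\simeq T(A_{\mathrm{red}})$ is the canonical one intervening in Corollary \ref{Coro ilginch}, but this is immediate from the construction $a/s\mapsto(a+\mathfrak{N})/(s+\mathfrak{N})$ used in the definition of the map $\Cl(A)\to\Cl(A_{\mathrm{red}})$.
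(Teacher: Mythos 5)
Your proposal is correct and follows essentially the same route as the paper: both invoke the exact sequence of Corollary \ref{Coro ilginch} and kill the third term by observing that, under the hypothesis, non-zero-divisors of $A$ and of $A_{\mathrm{red}}$ correspond, so that $T(A)_{\mathrm{red}}=T(A_{\mathrm{red}})$ and $\mathfrak{C}(T(A)_{\mathrm{red}},T(A_{\mathrm{red}}))=\mathfrak{C}(R,R)=0$. Your write-up is merely a little more explicit (identifying both rings as $\bar{S}^{-1}A_{\mathrm{red}}$) than the paper's, which only records the one implication needed for surjectivity of the canonical injection $T(A)_{\mathrm{red}}\subseteq T(A_{\mathrm{red}})$.
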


\begin{proof} Since $A_{\mathrm{red}}$ is a reduced ring, $Z(A_{\mathrm{red}})=\bigcup\limits_{\mathfrak{p}\in\Min(A)}
\mathfrak{p}/\mathfrak{N}$ where $\mathfrak{N}$ is the nil-radical of $A$. Thus if $b+\mathfrak{N}$ is a non-zero-divisor of $A_{\mathrm{red}}$ then $b$ is a non-zero-divisor of $A$. Hence, the canonical injection $T(A)_{\mathrm{red}}\subseteq T(A_{\mathrm{red}})$ is also surjective. But for any ring $R$ we have $\Cl(R,R)=0$ and so $\mathfrak{C}(T(A)_{\mathrm{red}}, T(A_{\mathrm{red}}))=0$. Then the assertion follows from Corollary \ref{Coro ilginch}.
\end{proof}

Regarding the above result, note that in addition to reduced rings, there are also non-reduced rings whose zero-divisors are unions of minimal primes. 

\begin{remark} For any ring $A$ we have the following commutative diagram of Abelian groups: $$\xymatrix{0\ar[r]&
\mathfrak{C}(A)\ar[r]\ar[d]&\Pic(A)\ar[r]\ar[d]^{\simeq}&
\Pic(T(A))\ar[d] \\ 0\ar[r]&\mathfrak{C}(A_{\mathrm{red}})\ar[r]&
\Pic(A_{\mathrm{red}})\ar[r]&\Pic(T(A_{\mathrm{red}}))}$$
where the rows are exact (see Corollary \ref{Coro 1 new T}) and the second vertical map is an isomorphism (Theorem \ref{Thm 100 npf}).
If the third vertical map is injective, then by the five lemma, the canonical group map $\Cl(A)\rightarrow\Cl(A_{\mathrm{red}})$ will be an isomorphism. However, as seen in Example \ref{Example TDH}, this will not happen very often.  
\end{remark}

\begin{corollary}\label{Coro dokuz} If $M$ is a module over a ring $A$, then $\Cl(A,A\oplus M)=0$ where $A\oplus M$ is the idealization of $M$.
\end{corollary}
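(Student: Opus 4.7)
The plan is to invoke Theorem \ref{Lemma TH 2} directly; in fact the bulk of the discussion immediately preceding that theorem already flags this case. I only need to exhibit a retraction of the ring extension $A \subseteq A \oplus M$, i.e., a ring homomorphism $f : A \oplus M \to A$ such that $f(a) = a$ for every $a \in A$ (identified with its image $(a,0)$ in the idealization).

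The obvious candidate is the projection onto the first coordinate, $f(a,x) = a$. The key point to verify is that this is indeed a ring homomorphism. Recall that by definition of the idealization the multiplication on $A \oplus M$ is $(a,x)(b,y) = (ab,\, ay + bx)$, with unit $(1,0)$. Under $f$, the product maps to $ab = f(a,x)\cdot f(b,y)$, addition is preserved componentwise, and the unit goes to $1$. So $f$ is a ring map, and by construction $f(a,0) = a$ for all $a \in A$. Thus $A \subseteq A \oplus M$ has a retraction.

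Applying Theorem \ref{Lemma TH 2} immediately gives $\mathfrak{C}(A, A \oplus M) = 0$. There is no real obstacle here; the only subtle point is remembering that the retraction must fix $A$ pointwise, which is precisely what the first-coordinate projection does. Note that this recovers, as a special case, the vanishing $\mathfrak{C}(A, A[M]) = 0$ for a commutative monoid $M$ (the augmentation map), since the idealization is morally a truncated version of the monoid ring construction, but here the retraction is even more transparent.
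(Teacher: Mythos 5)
Your proof is correct and takes exactly the same route as the paper: exhibit the first-coordinate projection $(a,x)\mapsto a$ as a retraction of $A\subseteq A\oplus M$ and apply Theorem \ref{Lemma TH 2}. The extra verification that this projection respects the idealization multiplication is a welcome detail the paper leaves implicit.
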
 

\begin{proof} The ring extension $A\subseteq A\oplus M$ has a retraction. In fact, the ring map $A\oplus M\rightarrow A$ given by $(r,m)\mapsto r$ is its retraction. Then apply Corollary \ref{Lemma TH 2}.
\end{proof}

\begin{example}\label{Example 3 three} Let $f:A\rightarrow B$ be a morphism of rings. We know that the kernel of the map $\Pic(f):\Pic(A)\rightarrow\Pic(B)$ is the set of all $L\in\Pic(A)$ such that $L\otimes_{A}B\simeq B$ as $B$-modules (i.e., this map must be a $B$-linear isomorphism). \\
It is important to note that if for some $L\in\Pic(A)$ we have $L\otimes_{A}B\simeq B$ as $A$-modules, then we cannot necessarily deduce that $L\in\Ker(\Pic(f))$. \\
As an example, take a ring $A$ whose Picard group is non-trivial and assume $L$ is a non-identity element of the group $\Pic(A)$. Then consider the idealization ring $B=A\oplus(\bigoplus\limits_{n\neq0}L^{\otimes n})$ where the second direct sum is taken over the set of nonzero integers. Then as $A$-modules we have $L\otimes_{A}B\simeq L\oplus
(\bigoplus\limits_{n\neq0}L^{\otimes(n+1)})
\simeq A\oplus(\bigoplus\limits_{k\neq0}L^{\otimes k})=B$. Thus $L\otimes_{A}B\simeq B$ as $A$-modules.
But $L\otimes_{A}B$ and $B$ are not isomorphic as $B$-modules. 
In other words, $L\notin\Ker(\Pic(j))$, because by Theorem \ref{Thm TRS 2} and Corollary \ref{Coro dokuz}, we have  $\Ker(\Pic(j))\simeq\mathfrak{C}(A,B)=0$, where $j:A\rightarrow B$ is the inclusion ring map. 
\end{example}

\begin{corollary} For any extension of rings $A\subseteq B$, then $\Cl(A,B)=\Cl(A, B\otimes_{A}B)$.
\end{corollary}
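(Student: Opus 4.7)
The plan is to realize $\Cl(A,B)=\Cl(A,B\otimes_A B)$ as the conclusion of the tower exact sequence from Theorem \ref{Theorem fourth TS} applied to a suitable three-step extension, where the rightmost term vanishes by the retraction theorem \ref{Lemma TH 2}.

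First I would set up the tower. Consider the canonical ring map $i:B\to B\otimes_A B$ given by $b\mapsto b\otimes 1$. The remark preceding the corollary (and the discussion between Theorem \ref{Lemma TH 2} and Corollary \ref{Coro dokuz}) records that $i$ admits the multiplication map $B\otimes_A B\to B$, $b\otimes b'\mapsto bb'$, as a retraction; in particular $i$ is a split injection, so we may regard $i$ as an inclusion. Composing with $A\subseteq B$ we obtain a genuine tower of ring extensions
$$A\subseteq B\subseteq B\otimes_A B.$$

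Next, I would apply Theorem \ref{Theorem fourth TS} to this tower, producing the exact sequence of Abelian groups
$$\xymatrix{0\ar[r]&\mathfrak{C}(A,B)\ar[r]^-{g}&\mathfrak{C}(A,B\otimes_A B)\ar[r]^-{h}&\mathfrak{C}(B,B\otimes_A B).}$$
Then I would invoke Theorem \ref{Lemma TH 2}: since $B\subseteq B\otimes_A B$ has a retraction (the multiplication map above), its ideal class group is trivial, i.e.\ $\mathfrak{C}(B,B\otimes_A B)=0$. Exactness at the middle term now forces $g$ to be surjective as well as injective, hence an isomorphism $\mathfrak{C}(A,B)\simeq\mathfrak{C}(A,B\otimes_A B)$.

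There is essentially no technical obstacle here; the only small point demanding care is to verify that $A\subseteq B\otimes_A B$ really is an extension of rings (so that $\mathfrak{C}(A,B\otimes_A B)$ is defined), which follows because the composite $A\hookrightarrow B\xrightarrow{i}B\otimes_A B$ of two injections is injective. Everything else is a direct combination of the two main theorems already at hand, so the argument will fit in a couple of lines in the final write-up.
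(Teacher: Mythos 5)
Your proposal is correct and follows exactly the paper's own argument: apply Theorem \ref{Theorem fourth TS} to the tower $A\subseteq B\subseteq B\otimes_{A}B$ and kill the third term via Theorem \ref{Lemma TH 2}, using the multiplication map as the retraction of $B\subseteq B\otimes_{A}B$. Your added remark that the splitting guarantees $B\to B\otimes_{A}B$ is injective (so the tower is a genuine tower of extensions) is a small point of care the paper leaves implicit, but the proof is the same.
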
 

\begin{proof} By applying Theorem \ref{Theorem fourth TS} to the extensions of rings $A\subseteq B\subseteq B\otimes_{A}B$ we obtain the following exact sequence of Abelian groups: $$\xymatrix{0\ar[r]&
\mathfrak{C}(A, B)\ar[r]&\mathfrak{C}(A,B\otimes_{A}B)\ar[r]&
\mathfrak{C}(B, B\otimes_{A}B).}$$
But the ring extension $B\subseteq B\otimes_{A}B$ has a retraction. Then by Corollary \ref{Lemma TH 2}, $\Cl(B, B\otimes_{A}B)=0$. 
\end{proof}

The ring defined in \cite[\S2]{Spirito} for extensions of integral domains can be generalized to arbitrary extensions of rings. More precisely, if $A\subseteq C$ and $B\subseteq C$ are extensions of rings then $AB$, the set of all finite sums of the form $\sum\limits_{k=1}^{n}a_{k}b_{k}$ with $n\geqslant1$, $a_{k}\in A$ and $b_{k}\in B$ for all $k$, is a subring of $C$. In this case, we also have the extensions of rings $A\subseteq AB$ and $B\subseteq AB$. Then the following result can be viewed as a generalization of \cite[Proposition 3.8]{Spirito}.

\begin{corollary} Let $A\subseteq B$ be an extension of rings which has a retraction and preserves non-zero-divisors. If $R$ is a subring of $T(A)$, then $\mathfrak{C}(R, BR)=0$. 
\end{corollary}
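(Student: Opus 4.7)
The plan is to apply Theorem \ref{Lemma TH 2} to the extension $R\subseteq BR$ after exhibiting an explicit retraction built from the given retraction $f:B\to A$ of $A\subseteq B$.

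First I would set up the common ambient ring in which the multiplication $BR$ lives. Because the extension $A\subseteq B$ preserves non-zero-divisors, the assignment $a/s\mapsto a/s$ defines an injective ring map $T(A)\to T(B)$: it is well defined because every non-zero-divisor of $A$ is a non-zero-divisor of $B$, and it is injective because an element of $A$ vanishing in $T(B)$ already vanishes in $B$ and hence in $A$. Identifying $R\subseteq T(A)$ with its image in $T(B)$, both $B$ and $R$ become subrings of $T(B)$, so $BR$ is literally a subring of $T(B)$ in the sense recalled just before the statement, and since $1\in B$ we have $R\subseteq BR$. Throughout I read the hypothesis in the natural way, namely $A\subseteq R\subseteq T(A)$, which is also what the candidate retraction below requires for its image to land in $R$.

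The candidate retraction is the additive map
$$g:BR\longrightarrow R,\qquad \sum\nolimits_k b_k r_k\;\longmapsto\;\sum\nolimits_k f(b_k)\,r_k.$$
Assuming $g$ is well defined, it is routinely multiplicative (because $f$ is) and fixes $R$ pointwise (since $f(1)=1$), so it is a retraction of the extension $R\subseteq BR$; Theorem \ref{Lemma TH 2} then concludes $\mathfrak{C}(R,BR)=0$.

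The main obstacle is therefore the well-definedness of $g$, and this is precisely the step in which both hypotheses are used together. Given an identity $\sum_k b_k r_k=\sum_k b'_k r'_k$ holding in $T(B)$, I would clear denominators by selecting a common $s\in A\setminus Z(A)$ with $r_k=a_k/s$ and $r'_k=a'_k/s$, where $a_k,a'_k\in A$ (such $s$ exists because the non-zero-divisors of $A$ form a multiplicative set). The assumed identity then rearranges to $\bigl(\sum_k b_k a_k-\sum_k b'_k a'_k\bigr)/s=0$ in $T(B)$; since $s$ is a non-zero-divisor of $B$ by the preservation hypothesis, the numerator must vanish in $B$. Applying the ring morphism $f$ yields $\sum_k f(b_k)a_k=\sum_k f(b'_k)a'_k$ in $A$, and dividing back by $s$ inside $T(A)$ gives $\sum_k f(b_k)r_k=\sum_k f(b'_k)r'_k$, which lies in $R$ because $A\subseteq R$. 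This is exactly the required well-definedness, completing the proof sketch.
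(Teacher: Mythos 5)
Your proof is correct and follows essentially the same route as the paper: exhibit the retraction $g\bigl(\sum_k b_k r_k\bigr)=\sum_k f(b_k)r_k$ of $R\subseteq BR$, verify its well-definedness by clearing denominators and using that non-zero-divisors of $A$ remain non-zero-divisors in $B$, and then invoke Theorem \ref{Lemma TH 2}. Your explicit reading $A\subseteq R\subseteq T(A)$ is the same assumption the paper's proof uses implicitly (its map $\sum_i r_i b_i\mapsto\sum_i r_i f(b_i)$ likewise needs $AR\subseteq R$ for the image to land in $R$), so this is a point of extra care rather than a divergence.
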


\begin{proof} To prove the assertion, by Corollary \ref{Lemma TH 2}, it suffices to show that the ring extension $R\subseteq RB$ has a retraction. First note that, since $A\setminus Z(A)\subseteq B\setminus Z(B)$, we have the extension of rings $T(A)\subseteq T(B)$. Hence, we really do have the ring $RB$. \\
Since the extension $A\subseteq B$ has a retraction, there is a ring map $f:B\rightarrow A$ such that $f(a)=a$ for all $a\in A$. \\
Then we claim that the relation $g:RB\rightarrow R$ given by $\sum\limits_{i=1}^{n}r_{i}b_{i}\mapsto\sum
\limits_{i=1}^{n}r_{i}f(b_{i})$ is well-defined (i.e., it is a function). Suppose that $\sum\limits_{i=1}^{n}r_{i}b_{i}=
\sum\limits_{k=1}^{d}r'_{k}b'_{k}$ where the $r_{i}, r'_{k}\in R$, $b_{i},b'_{k}\in B$ and $n,d\geqslant1$. Since $R \subseteq T(A)$, we may write $r_{i}=a_{i}/s_{i}$ and $r'_{k}=a'_{k}/s'_{k}$  where $a_{i}, a'_{k}\in A$ and $s_{i}, s'_{k}\in A\setminus Z(A)$. It follows that $s'\sum\limits_{i}t_{i}a_{i}b_{i}=
s\sum\limits_{k}t'_{k}a'_{k}b'_{k}\in B$ where $s=\prod\limits_{i=1}^{n}s_{i}$, $s'=\prod\limits_{k=1}^{d}s'_{k}$, 
$t_{i}=\prod\limits_{j\neq i}s_{j}$ and $t'_{k}=\prod\limits_{\ell\neq k}s'_{\ell}$. Then we get that $s'\sum\limits_{i}t_{i}a_{i}f(b_{i})=
s\sum\limits_{k}t'_{k}a'_{k}f(b'_{k})$. This yields that 
$ss'\big(\sum\limits_{i}r_{i}f(b_{i})-
\sum\limits_{k}r'_{k}f(b'_{k})\big)=0$. But $ss'$ is a non-zero-divisor of $A$ and $\sum\limits_{i}r_{i}f(b_{i})-
\sum\limits_{k}r'_{k}f(b'_{k})\in T(A)$. Then
$\sum\limits_{i}r_{i}f(b_{i})=
\sum\limits_{k}r'_{k}f(b'_{k})$. This establishes the claim. This shows that $g$ is a function. Then it can be easily seen that $g$ is a morphism of rings and $g(r)=r$ for all $r\in R$.     
\end{proof}

\begin{remark}\label{mini remark} Assume an extension of rings $A\subseteq B$ has a retraction. Then there is a morphism of rings $f:B\rightarrow A$ such that $f(a)=a$ for all $a\in A$. If $I$ is an ideal of $B$ such that $I\subseteq\Ker(f)$ then it is clear that the extension of rings $A\subseteq B/I$ also has a retraction.
\end{remark}

Let $A$ be a ring such that the extension $T(A)_{\mathrm{red}}\subseteq T(A_{\mathrm{red}})$ has a retraction, then using Corollaries \ref{Lemma TH 2} and \ref{Coro ilginch}, we have the canonical isomorphism of groups $\mathfrak{C}(A)\simeq\mathfrak{C}(A_{\mathrm{red}})$. This can be viewed as a generalization of Corollary \ref{Coro ilginch 2}.

\begin{remark}\label{Remark three} Let $A\subseteq B\subseteq C$ and $A'\subseteq B'\subseteq C'$ be extensions of rings and let $\phi:C\rightarrow C'$ be a ring map such that $\phi(A)\subseteq A'$ and $\phi(B)\subseteq B'$. Then in the category of extensions of rings, we have the following commutative diagram:
$$\xymatrix{(A,C)\ar[r]\ar[d]^{\phi}&(B,C)\ar[d]^{\phi} \\ (A',C')\ar[r]&(B', C').}$$
Then the functoriality of the group of invertible ideals, gives us the following commutative diagram of Abelian groups: $$\xymatrix{
\mathscr{G}(A,C)\ar[r]\ar[d]&\mathscr{G}(B,C)\ar[d] \\ \mathscr{G}(A',C')\ar[r]&
\mathscr{G}(B', C').}$$
\end{remark}

The sequence in Theorem \ref{Theorem fourth TS} is not right exact in general. In other words, the canonical map $\Cl(A,C)\rightarrow\Cl(B,C)$ is not necessarily surjective. First note that for any extensions of rings $A\subseteq B\subseteq C$ the following diagram is commutative: $$\xymatrix{
\mathfrak{C}(A,C)\ar[r]\ar[d]^{} &\Pic(A)\ar[d]^{} \\ \mathfrak{C}(B,C)\ar[r]^{}&\Pic(B)\ar[r]&\Pic(C).}$$ 
Indeed, if $I\in\mathscr{G}(A,C)$ then by Lemma \ref{Lemma 5 besh}, $I\otimes_{A}B\simeq IB$ as $B$-modules.
In the above diagram, the bottom row is also exact (see Theorem \ref{Thm TRS 2}). Now for instance, if the Picard groups of $A$ and $C$ are trivial but $\Pic(B)\neq0$ (see Example \ref{Example one}), then the map $\Cl(A,C)\rightarrow\Cl(B,C)$ is not surjective. 

However, we will observe that the canonical map $\Cl(A,C)\rightarrow\Cl(B,C)$ is surjective in some cases. To this end, let us first recall from \cite[\S2]{Swan 2} that a ring extension $A\subseteq B$ is called subintegral if it is integral, the induced map between the corresponding prime spectra is bijective and the extensions between the residue fields are isomorphisms. We have then the following result: 

\begin{corollary}\label{Coro Ani-Elman-Asena} Let $A\subseteq B\subseteq C$ be extensions of rings such that one of the following conditions hold: \\
$\mathbf{(i)}$ the canonical map $\Cl(A_{\mathrm{red}},C_{\mathrm{red}})
\rightarrow\Cl(B_{\mathrm{red}},C_{\mathrm{red}})$ is surjective. \\
$\mathbf{(ii)}$ the extension $A\subseteq C$ is subintegral. \\
Then we have the following short exact sequence of Abelian groups:  $$\xymatrix{0\ar[r]&
\mathfrak{C}(A,B)\ar[r]&\mathfrak{C}(A,C)\ar[r]&
\mathfrak{C}(B,C)\ar[r]&0.}$$
\end{corollary}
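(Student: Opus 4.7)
The plan is to start from the exact sequence of Theorem \ref{Theorem fourth TS}, which already delivers exactness at $\mathfrak{C}(A,B)$ and $\mathfrak{C}(A,C)$; what remains under either hypothesis is surjectivity of the canonical morphism $\mathfrak{C}(A,C)\to\mathfrak{C}(B,C)$, $[L]\mapsto[LB]$.

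For hypothesis (i), I would exploit the naturality of the isomorphism of Theorem \ref{theorem TH}. The canonical ring surjection $C\to C_{\mathrm{red}}$ restricts to $A\to A_{\mathrm{red}}$ and to $B\to B_{\mathrm{red}}$, hence by the functoriality recorded in Remark \ref{Remark three} we obtain a commutative diagram
$$\xymatrix{\mathfrak{C}(A,C)\ar[r]\ar[d]^{\simeq}&\mathfrak{C}(B,C)\ar[d]^{\simeq} \\ \mathfrak{C}(A_{\mathrm{red}},C_{\mathrm{red}})\ar[r]&\mathfrak{C}(B_{\mathrm{red}},C_{\mathrm{red}})}$$
whose vertical arrows are the isomorphisms of Theorem \ref{theorem TH}. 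Surjectivity of the bottom arrow (hypothesis (i)) then transfers to surjectivity of the top.

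For hypothesis (ii), I would first argue that subintegrality of $A\subseteq C$ descends to both intermediate extensions. Integrality is clearly inherited by $A\subseteq B$ and $B\subseteq C$; the composition $\Spec C\to\Spec B\to\Spec A$ is bijective and the right factor $\Spec C\to\Spec B$ is surjective by integrality of $B\subseteq C$, which forces both factor maps to be bijections; the residue-field identifications then split through the intermediate primes. Hence $A\subseteq B$ and $B\subseteq C$ are both subintegral. I would then invoke the fact from the theory of seminormalization (cf.~\cite{Swan 2}) that a subintegral extension induces a surjective map on Picard groups. Given $L\in\mathfrak{C}(B,C)$, realize it as an element of $\Pic(B)$ through the injection in Theorem \ref{Thm TRS 2} and lift to some $\widetilde{L}\in\Pic(A)$; commutativity of $\Pic(A)\to\Pic(B)\to\Pic(C)$ together with the triviality of the image of $L$ in $\Pic(C)$ places $\widetilde{L}$ in $\Ker(\Pic(A)\to\Pic(C))=\mathfrak{C}(A,C)$, and by construction $\widetilde{L}$ maps to $L$.

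The main obstacle is precisely this appeal, in the proof of (ii), to the surjectivity of $\Pic(A)\to\Pic(B)$ under a subintegral extension, which is the substantive external input from seminormalization theory. A fully self-contained argument avoiding the citation would require showing directly that every invertible ideal $L\in\mathscr{G}(B,C)$ can, after multiplication by a suitable unit of $C$, be represented as an invertible ideal of the smaller extension $A\subseteq C$; this is where the Spec bijection, the residue-field matching, and the structure of the conductor associated to the subintegrality of $A\subseteq C$ should intervene, but the technical details look delicate and would likely require an induction on an elementary-subintegral generating set for $C$ over $A$.
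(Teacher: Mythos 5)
Your handling of exactness at $\mathfrak{C}(A,B)$ and $\mathfrak{C}(A,C)$ and your proof under hypothesis (i) coincide with the paper's: both reduce to Theorem \ref{Theorem fourth TS} and transfer surjectivity across the commutative square coming from Remark \ref{Remark three} and the isomorphisms of Theorem \ref{theorem TH}. Under hypothesis (ii) you take a genuinely different route. The paper simply invokes \cite[Theorem 3.3]{Singh}, which asserts that for a subintegral extension $A\subseteq C$ the map $\mathscr{G}(A,C)\rightarrow\mathscr{G}(B,C)$ is surjective, and surjectivity of $\mathfrak{C}(A,C)\rightarrow\mathfrak{C}(B,C)$ follows at once by passing to quotients. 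You instead observe (correctly, though only the $A\subseteq B$ half of your descent argument is actually used) that subintegrality passes to $A\subseteq B$, and reduce the claim to the surjectivity of $\Pic(A)\rightarrow\Pic(B)$ for a subintegral extension; together with the kernel identifications of Theorem \ref{Thm TRS 2} and the compatibility of $\mathfrak{C}(A,C)\rightarrow\mathfrak{C}(B,C)$ with $\Pic(A)\rightarrow\Pic(B)$, that does finish the proof. The step you must still secure is exactly the one you flag: the Picard surjectivity statement is true, but it is not stated in \cite{Swan 2}, so a ``cf.'' is not a proof. It can be established as follows: by Swan's structure theorem a subintegral extension is a filtered union of finite towers of elementary subintegral extensions $A\subseteq B=A[b]$ with $b^{2},b^{3}\in A$; for such an extension $B=A+Ab$ is module-finite over $A$, the conductor $\mathfrak{c}$ contains $b^{2}$ and $b^{3}$, hence $B/\mathfrak{c}=A/\mathfrak{c}+N$ with $N$ nil and $(B/\mathfrak{c})_{\mathrm{red}}\simeq(A/\mathfrak{c})_{\mathrm{red}}$, so $\Pic(A/\mathfrak{c})\rightarrow\Pic(B/\mathfrak{c})$ is an isomorphism and the Units--Pic exact sequence of the conductor square forces $\Pic(A)\rightarrow\Pic(B)$ to be surjective; the general case follows because $\Pic$ commutes with direct limits. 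With that supplement your argument is complete, and what it buys is a Picard-theoretic proof independent of \cite{Singh}, at the cost of importing the conductor-square machinery; your closing worry about representing invertible ideals directly by induction on elementary subintegral generators is precisely the content of Singh's theorem, so you may either cite it as the paper does or carry out the sketch above.
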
 

\begin{proof} First, suppose that (i) holds. To prove the assertion, by Theorem \ref{Theorem fourth TS}, it suffices to show that the canonical map $\Cl(A,C)\rightarrow\Cl(B,C)$ is surjective. By Remark \ref{Remark three}, the following diagram is commutative: $$\xymatrix{
\mathfrak{C}(A,C)\ar[r]\ar[d]&\mathfrak{C}(B,C)\ar[d] \\ \mathfrak{C}(A_{\mathrm{red}},C_{\mathrm{red}})\ar[r]&
\mathfrak{C}(B_{\mathrm{red}}, C_{\mathrm{red}})}$$
where the vertical arrows are isomorphisms (see Theorem \ref{theorem TH}). Thus the canonical map $\Cl(A,C)\rightarrow\Cl(B,C)$ is surjective. \\  
Now assume that (ii) holds. Then by \cite[Theorem 3.3]{Singh}, the canonical map $\mathscr{G}(A,C)\rightarrow\mathscr{G}(B,C)$ is surjective. Hence, the induced map
$\Cl(A,C)\rightarrow\Cl(B,C)$ is surjective. Then apply Theorem \ref{Theorem fourth TS}.
\end{proof}

In fact, for any  extensions of rings $A\subseteq B\subseteq C$, the canonical map $\Cl(A,C)\rightarrow\Cl(B,C)$ is surjective if and only if the canonical map $\Cl(A_{\mathrm{red}},C_{\mathrm{red}})
\rightarrow\Cl(B_{\mathrm{red}},C_{\mathrm{red}})$ is surjective. \\

\textbf{Acknowledgments.} We would like to give sincere thanks to Professors Brian Conrad, Pierre Deligne, and Melvin Hochster who generously shared with us their very valuable and excellent ideas.

\end{document}